\theoremstyle{plain}
\numberwithin{equation}{section} \numberwithin{figure}{section}
\newtheorem{theorem}{Theorem}[section]
\newtheorem{lemma}[theorem]{Lemma}
\newtheorem{corollary}[theorem]{Corollary}
\newtheorem{definition}[theorem]{Definition}
\theoremstyle{definition}
\newtheorem{remark}[theorem]{Remark}
\begin{document}
\title[Non-homogeneous p-Laplace equations] {On viscosity and weak solutions for non-homogeneous p-Laplace equations}
\author{Mar\'ia Medina and Pablo Ochoa}
\address{Maria Medina, Facultad de Matem\'aticas, Pontificia Universidad Cat\'olica de Chile, Avenida Vicu\~na Mackenna 4860, Santiago, Chile}
\email{mamedinad@mat.puc.cl}
\address{Pablo Ochoa, Universidad Nacional de Cuyo-CONICET, Mendoza 5500, Argentina} 
\email{ochopablo@gmail.com}

\subjclass[2010]{35J92, 35J70, 35D40, 35D30}

\keywords{quasilinear equations with p-Laplacian, weak solutions, viscosity solutions, non-homogeneous equation}

\thanks{The first  author was supported by the grant FONDECYT Postdoctorado 2016, No. 3160077. The second author was partially supported by CONICET and grant PICT 2015-1701 AGENCIA.}

\date{\today}
\begin{abstract} 

In this manuscript, we study the relation between viscosity and weak solutions for non-homogeneous p-Laplace equations with lower order term depending on $x$, $u$ and $\nabla u$. More precisely, we prove that any locally bounded viscosity solution constitutes a weak solution, extending  results presented in Juutinen, Lindqvist and Manfredi \cite{JLM}, and Julin and Juutinen \cite{JJ}. Moreover, we provide a converse  statement in the full case under extra assumptions  on the data.

\end{abstract}

\maketitle

\section{Introduction and main results}

In this work, we consider the following degenerate (or singular) elliptic equations of p-Laplacian type
\begin{equation}\label{1}
-\mbox{div}\big( |\nabla u|^{p-2}\nabla u\big)=f(x, u, \nabla u),
\end{equation}defined in an open and bounded set $\Omega \subset \mathbb{R}^{n}$ and for $1 < p< \infty$. The modulus of ellipticity of the p-Laplace operator is $|\nabla u|^{p-2}$. When $p > 2$, the modulus vanishes whenever $\nabla u=0$ and the equation is called degenerate at those points where that occurs. On the other hand, for $p< 2$, the modulus becomes infinite when $\nabla u=0$, and the equation is called singular at those points. Observe that the case $p=2$ is just the linear case and corresponds to the Laplace operator.

Different notions of solutions have been formulated for equation \eqref{1}. We are interested in the relation between Sobolev weak solutions and viscosity solutions. For the homogeneous p-Laplace equation, this relation has already been studied by Juutinen, Lindqvist and Manfredi in \cite{JLM}, via the notion of p-harmonic, p-subharmonic and p-superharmonic functions. Roughly speaking, a p-harmonic function is a continuous function which solves, weakly, the homogeneous p-Laplace equation, and a p-superharmonic (p-subharmonic) function is a lower (upper) semicontinuous function that admits comparison with p-harmonic functions from below (above). 

In \cite{JLM}, the authors showed that the notion of p-harmonic solution is equivalent to the notion of viscosity solution. Moreover, it was shown in \cite{L} that locally bounded p-harmonic functions are weak solutions. Conversely, every weak solution to the homogeneous p-Laplace equation has a representative which is lower semi-continuous and it is p-harmonic. We refer the interested reader to \cite{H} for further details. In this way, there is an equivalence between the notion of weak and viscosity solutions for the homogeneous framework. It is worth to mention that a different and simpler proof of this equivalence was stated by Julin and Juutinen in \cite{JJ} by using inf and sup convolutions.  In turn, this reasoning was extended in \cite{N} to more general second-order differential equations.

For the non-homogeneous case, the  notion of p-harmonic functions is lost  and we need to study directly the link between viscosity and Sobolev weak solutions. In \cite{JJ}, the authors showed that viscosity solutions of \eqref{1} are weak solutions in the case where $f$ is continuous and  depends only on $x$. 

Our main goal in the present manuscript is to prove the equivalence of these two notions of solutions for the general structure \eqref{1}. The implication that viscosity solutions are weak solutions is partially based on the work \cite{JJ}, but the non-homogeneous nature of the equation under consideration requires some extra effort to deal with the lower order term. 

On the other hand, the converse statement relies on comparison principles for weak solutions. To the best of our knowledge, the available comparison results for the full case $f=f(x, s, \eta)$ require additional limitations in the degenerate case which do not appear in the singular context (compare Theorem \ref{comparison for weaks} and Theorem \ref{comparison for weaks singular}). Moreover, we believe that  the assumption that weak subsolutions and weak supersolutions belong to $\mathcal{C}^{1}$ or to the Sobolev space $W^{1, \infty}_{loc}$ in order to have comparison is not a strong limitation since we are interested in the equivalence of weak and viscosity solutions, and for weak solutions the $\mathcal{C}^{1, \alpha}$-regularity holds (see \cite{DB, P}). Finally, in the quasi-linear case $f=f(x, u)$ there is no need to impose higher regularity than $W^{1, p}_{loc} \cap \mathcal{C}$ on the solutions. We refer the reader to \cite{PS} for a survey of maximum principles and comparison results for general structures in divergence form.

Finally, we stress that the equivalence between weak and viscosity solutions may be used to prove relevant properties on the solutions. As an example, in \cite{JLr} the authors prove a Rad\'o's type theorem for p-harmonic functions. Roughly speaking, they state that if a function $u$ solves, weakly, the homogeneous p-Laplace equation in the complement of the set where $u$ vanishes, then it is a solution in the whole set. It is an open problem to obtain a similar result for equations like \eqref{1}. We shall return to this issue in a subsequent paper.
\medskip

We recall that the p-Laplace operator is defined as
$$\Delta_p u:=\mbox{div}\big( |\nabla u|^{p-2}\nabla u\big).$$
Let us state the different type of solutions to \eqref{1} we will manage.

\begin{definition}[Sobolev weak solution] A function $u \in W^{1, p}_{loc}(\Omega)$ is a weak supersolution to \eqref{1} if
\begin{equation}
\int_\Omega|\nabla u|^{p-2}\nabla u \cdot \nabla \psi \geq \int_\Omega f(x, u, \nabla u)\psi
\end{equation}
for all non negative $\psi \in \mathcal{C}_0^{\infty}(\Omega)$. On the other hand, $u$ is a weak subsolution if  $-u$ is a weak supersolution of the equation $-\Delta_p u=-f(x, -u, -\nabla u)$. We call $u$ a weak solution if it is both a weak subsolution and a weak supersolution to \eqref{1}.
\end{definition}

Due to the non-homogeneous nature of \eqref{1}, viscosity solutions are stated as in \cite{JJ}, considering semicontinuous envelopes of the p-Laplace operator. More precisely, we have

\begin{definition}
A lower semicontinuous function $u:\Omega\rightarrow (-\infty,+\infty]$ is a viscosity supersolution to \eqref{1} if $u\not\equiv +\infty$ and for every $\phi\in\mathcal{C}^2(\Omega)$ such that $\phi(x_0)=u(x_0)$, $u(x)\geq \phi(x)$ and $\nabla \phi(x)\neq 0$ for all $x\neq x_0$, there holds
\begin{equation}\label{limsup}
\lim_{r\rightarrow 0}\sup_{x\in B_r(x_0)\setminus\{x_0\}}(-\Delta_p\phi(x))\geq f(x_0,u(x_0),\nabla \phi(x_0)).
\end{equation}
A function $u$ is a viscosity subsolution if $-u$ is a viscosity supersolution to the equation $-\Delta_p u=-f(x, -u, -\nabla u)$ , and it is a viscosity solution if it is both a viscosity sub- and supersolution.
\end{definition}
\begin{remark}
Notice that condition \eqref{limsup} is established this way to avoid the problems derived from having $\nabla\phi(x_0)=0$ in the case $1<p<2$. If $p\geq 2$, this condition can be simply replaced by
$$-\Delta_p\phi(x_0)\geq f(x_0,u(x_0),\nabla\phi(x_0)).$$
\end{remark}

We now list the main contributions of our work. The results are stated for supersolutions, but they hold for subsolutions as well. 


\begin{theorem}\label{nonSing}
Let $1 < p < \infty$. Assume that  $f=f(x,s,\eta)$ is uniformly continuous in $\Omega \times \mathbb{R}\times \mathbb{R}^{n}$, non-increasing in $s$, and satisfies the following growth condition
\begin{eqnarray}\label{growth cond}
|f(x,s, \eta)|\leq \gamma(|s|)|\eta|^{p-1} + \phi(x),\end{eqnarray}  
where $\gamma \geq 0$ is  continuous, and $\phi \in L^{\infty}_{loc}(\Omega)$. Hence, 
if $u \in L^{\infty}_{loc}(\Omega)$ is a viscosity supersolution to \eqref{1}, then it is a weak supersolution to \eqref{1}.
\end{theorem}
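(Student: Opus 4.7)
My plan is to adapt the inf-convolution approach of Julin--Juutinen \cite{JJ} to the non-homogeneous setting, with extra care for the dependence of $f$ on $u$ and $\nabla u$. Fix a subdomain $\Omega' \Subset \Omega$ and, for a suitable $q > \max\{2, p/(p-1)\}$, regularize $u$ from below by the inf-convolution
\begin{equation*}
u_\epsilon(x) := \inf_{y \in \Omega} \left\{ u(y) + \frac{1}{q \epsilon^{q-1}} |x-y|^q \right\}.
\end{equation*}
For $\epsilon$ small the infimum is attained at a point $y_\epsilon(x)$ with $|x - y_\epsilon(x)| \to 0$ uniformly on $\Omega'$, and $u_\epsilon$ is locally Lipschitz, semiconcave, monotonically increases pointwise to $u$, and is uniformly bounded on $\Omega'$ by the local boundedness of $u$. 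By Alexandrov's theorem, $u_\epsilon$ has a pointwise Hessian almost everywhere.

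Next, I would show that $u_\epsilon$ is a viscosity supersolution of a perturbed equation
\begin{equation*}
-\Delta_p u_\epsilon(x) \geq f(y_\epsilon(x), u_\epsilon(x), \nabla u_\epsilon(x)) - \sigma(\epsilon), \qquad x \in \Omega',
\end{equation*}
with $\sigma(\epsilon) \to 0$. The standard translation argument says that any $\mathcal{C}^2$ function touching $u_\epsilon$ from below at $x_0$ yields, after a shift, a test function touching $u$ from below at $y_\epsilon(x_0)$; the viscosity inequality for $u$ at $y_\epsilon(x_0)$, together with $u(y_\epsilon(x_0)) \leq u_\epsilon(x_0)$, the monotonicity of $f$ in $s$, and the uniform continuity of $f$, produces the displayed inequality. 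Semiconcavity upgrades this to an a.e.\ pointwise inequality, and integration against $0 \leq \psi \in \mathcal{C}_0^\infty(\Omega')$ gives the weak supersolution property for $u_\epsilon$.

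The passage $\epsilon \to 0$ is the principal obstacle, because of the natural growth $\gamma(|s|)|\eta|^{p-1}$ in \eqref{growth cond}. To identify the limit of $|\nabla u_\epsilon|^{p-2}\nabla u_\epsilon$ I need strong $L^p_{loc}$ convergence of the gradients, which in turn requires a uniform Caccioppoli-type bound $\|\nabla u_\epsilon\|_{L^p(\Omega'')} \leq C$ on every $\Omega'' \Subset \Omega'$. I would obtain this by testing the weak inequality for $u_\epsilon$ with a Moser-type function of the form $\psi = \zeta^p\bigl(e^{\lambda(M - u_\epsilon)} - 1\bigr)$ (with $\zeta$ a cutoff, $M$ an upper bound for $u_\epsilon$ on $\Omega'$, and $\lambda$ chosen large in terms of $\sup_{|s| \leq \|u\|_\infty} \gamma(|s|)$), so that the exponential weight absorbs the $|\nabla u_\epsilon|^{p-1}$ term into the p-Laplacian contribution; local boundedness of $u$, continuity of $\gamma$, and $\phi \in L^\infty_{loc}$ are exactly what makes this absorption go through. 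With the uniform $W^{1,p}_{loc}$ bound in hand, weak convergence in $W^{1,p}_{loc}$ follows, and strong convergence of $\nabla u_\epsilon$ is extracted via the monotonicity of the p-Laplacian operator applied to the difference $u_\epsilon - u$ (which converges pointwise to zero).

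Finally, I would pass to the limit in the weak inequality for $u_\epsilon$. Strong convergence $\nabla u_\epsilon \to \nabla u$ in $L^p_{loc}$ handles the left-hand side, while on the right-hand side the uniform continuity of $f$, the dominating bound provided by \eqref{growth cond}, and a Vitali argument give $f(y_\epsilon(\cdot), u_\epsilon, \nabla u_\epsilon) \to f(\cdot, u, \nabla u)$ in $L^1_{loc}$. The expected hard step is the Caccioppoli estimate with natural growth; everything else is a fairly standard assembly of the inf-convolution machinery adapted to the extra $s$ and $\eta$ dependence.
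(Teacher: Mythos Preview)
Your outline is correct and follows essentially the same inf-convolution strategy as the paper. Two points of comparison are worth flagging.

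First, the exponential Moser test function $\zeta^p\bigl(e^{\lambda(M-u_\epsilon)}-1\bigr)$ is more machinery than the situation requires. Because the growth in \eqref{growth cond} is $|\eta|^{p-1}$ (strictly subcritical, not the natural growth $|\eta|^{p}$), the paper obtains the Caccioppoli bound with the elementary test function $(\sup_K u_\epsilon - u_\epsilon)\xi^{p}$ and a direct application of Young's inequality to absorb the term $\gamma_\infty|\nabla u_\epsilon|^{p-1}\cdot\mathrm{osc}\,u_\epsilon$; no exponential weight is needed. Your approach would also work, but it is a heavier hammer designed for the genuinely critical case.

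Second, the sentence ``semiconcavity upgrades this to an a.e.\ pointwise inequality, and integration \dots\ gives the weak supersolution property for $u_\epsilon$'' hides a step in the singular range $1<p<2$: the pointwise inequality $-\Delta_p u_\epsilon \geq f_\epsilon$ is only available a.e.\ on $\{\nabla u_\epsilon\neq 0\}$, so to integrate over all of $\Omega'$ one must separately check that $f_\epsilon(\hat x,u_\epsilon(\hat x),0)\leq 0$ whenever $\nabla u_\epsilon(\hat x)=0$. The paper does this by testing $u$ with $\psi(y)=u(\hat x)-|\hat x-y|^{q}/(q\epsilon^{q-1})$ and using exactly your choice $q>p/(p-1)$ to make $-\Delta_p\psi\to 0$; your setup accommodates this, but the step should be made explicit.
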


A  converse of Theorem \ref{nonSing} is given below.

\begin{theorem}\label{converse}
Assume that  $f=f(x,s,\eta)$  is continuous in $\Omega \times \mathbb{R}\times \mathbb{R}^{n}$, non-increasing in $s$, and  locally Lipschitz continuous with respect to  $\eta$. Hence we have the following:

\begin{itemize}
\item[(i)] If $1 <p \leq 2$ and  if $u \in  W^{1, \infty}_{loc}(\Omega)$ is a weak supersolution to \eqref{1}, then it is a viscosity supersolution to \eqref{1} in $\Omega$.
\item[(ii)] If $p >2$, $f(x, s, 0)=0$ for $x \in \Omega$ and $s \in \mathbb{R}$, and if $u \in \mathcal{C}^{1}(\Omega)$ is a weak supersolution to \eqref{1}, then it is a viscosity supersolution to \eqref{1} in $\Omega$.
\item[(iii)] Finally, if $p >2$ and if $u \in  W^{1, \infty}_{loc}(\Omega)$ is a weak supersolution to \eqref{1}, with $\nabla u \neq 0$ in $\Omega$, then it is a viscosity supersolution to \eqref{1}.
\end{itemize}
\end{theorem}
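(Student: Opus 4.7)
The plan is to argue by contradiction, reducing the matter to an application of the weak-solution comparison principles (Theorem \ref{comparison for weaks} for $p>2$, Theorem \ref{comparison for weaks singular} for $1<p\leq 2$). Suppose $u$ is not a viscosity supersolution. Then there exist $x_0 \in \Omega$, $\epsilon>0$, $r_0>0$, and $\phi\in C^2$ with $\phi(x_0)=u(x_0)$, $\phi\leq u$, and $\nabla\phi\neq 0$ on $B_{r_0}(x_0)\setminus\{x_0\}$, such that
$$-\Delta_p\phi(x)\leq f(x_0,u(x_0),\nabla\phi(x_0))-\epsilon\qquad\text{on }B_{r_0}(x_0)\setminus\{x_0\}.$$
In case (ii) I would further note that the hypothesis $f(x,s,0)=0$ together with the continuity of $-\Delta_p\phi$ (valid when $p>2$) makes the viscosity inequality automatic when $\nabla\phi(x_0)=0$; hence the failure must come from some test function with $\nabla\phi(x_0)\neq 0$.

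The next step is to construct a strict classical subsolution that lies above $u$ at $x_0$ but below $u$ on a surrounding sphere. First I would make the touching strict by replacing $\phi$ with $\hat\phi(x):=\phi(x)-|x-x_0|^4$; this preserves $\hat\phi\in C^2$, $\hat\phi(x_0)=u(x_0)$, $\hat\phi\leq u$ (now strictly away from $x_0$), the condition $\nabla\hat\phi\neq 0$ off $x_0$, and, after possibly shrinking $r_0$, the classical inequality with margin $\epsilon/2$. Then picking $r$ small so that $m_r:=\min_{\partial B_r}(u-\hat\phi)>0$ and $\delta\in(0,m_r)$, the function $\tilde\phi:=\hat\phi+\delta$ satisfies $\tilde\phi\leq u$ on $\partial B_r$ but $\tilde\phi(x_0)>u(x_0)$. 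Using the continuity of $f$ (and of $\hat\phi$, $\nabla\hat\phi$) and shrinking $r,\delta$ further, I arrange
$$-\Delta_p\tilde\phi(x)\leq f(x,\tilde\phi(x),\nabla\tilde\phi(x))-\epsilon/4\qquad\text{on }B_r(x_0)\setminus\{x_0\}.$$
Since $\tilde\phi\in C^2(B_r)$, a standard cutoff argument centered at $x_0$ (exploiting the vanishing of $|\nabla\tilde\phi|^{p-1}$ near $x_0$, so that the boundary flux through $\partial B_\sigma(x_0)$ is $O(\sigma^{n-1}\sigma^{p-1})\to 0$ as $\sigma\to 0$) promotes this classical inequality to a weak subsolution inequality on the full ball $B_r$.

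The final step is to apply the appropriate comparison principle to the pair $(\tilde\phi,u)$ on $B_r$, obtaining $\tilde\phi\leq u$ in $B_r$ and thereby contradicting $\tilde\phi(x_0)>u(x_0)$. In case (i), Theorem \ref{comparison for weaks singular} applies since both competitors belong to $W^{1,\infty}_{loc}$. In case (iii), the assumption $\nabla u\neq 0$ keeps us uniformly away from the degenerate regime and Theorem \ref{comparison for weaks} applies directly. Case (ii) is the most delicate: the hypotheses $u\in\mathcal{C}^1$ and $f(x,s,0)=0$ are precisely what Theorem \ref{comparison for weaks} needs in order to handle points where $\nabla u$ may vanish. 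The main obstacle I foresee is exactly this verification of the degenerate comparison principle, which is what dictates the splitting of the statement into three subcases with their distinct regularity requirements; once the comparison step is secured, the perturbation argument that produces the contradiction is otherwise routine.
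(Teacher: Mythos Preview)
Your overall strategy---contradiction, perturb the touching test function, then invoke a weak comparison principle---matches the paper's, and for parts (i) and (iii) it goes through essentially as you describe. One streamlining worth noting: rather than arranging $-\Delta_p\tilde\phi \le f(x,\tilde\phi,\nabla\tilde\phi)-\epsilon/4$ by continuity, the paper freezes the second argument and works with $\tilde f(x,\eta):=f(x,u(x),\eta)$. Then $\tilde\varphi:=\varphi+m$ satisfies $-\Delta_p\tilde\varphi\le \tilde f(x,\nabla\tilde\varphi)$ immediately from the original inequality, and $u$ is automatically a weak supersolution of the \emph{same} equation since $-\Delta_p u\ge f(x,u,\nabla u)=\tilde f(x,\nabla u)$. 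This places both competitors under one equation without any $\epsilon$-bookkeeping.

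For case (ii) your proposal contains a correct idea and a confused conclusion. Your opening observation is right, and in fact yields a more direct argument than the paper's: when $p>2$ and $f(x,s,0)=0$, any test function with $\nabla\phi(x_0)=0$ automatically satisfies the viscosity inequality (both sides vanish), so a failure forces $\nabla\phi(x_0)\neq 0$; then $|\nabla\tilde\phi|>0$ on a small ball, and Theorem~\ref{comparison for weaks} applies because $|\nabla u|+|\nabla\tilde\phi|>0$. Your closing sentence, however, misstates the mechanism: Theorem~\ref{comparison for weaks} does not take $u\in\mathcal{C}^1$ or $f(x,s,0)=0$ as hypotheses---its sole structural requirement is $|\nabla u|+|\nabla v|>0$, and it is the nonvanishing of $\nabla\tilde\phi$ (secured by your earlier observation), not any property of $\nabla u$, that supplies this. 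The paper takes a different route: it first proves $u$ is a viscosity supersolution on the open set $\{\nabla u\neq 0\}$ (where $|\nabla u|>0$ feeds the comparison principle), and then invokes a removability result \cite[Corollary~4.4]{JL2}---which is where $u\in\mathcal{C}^1$ and $f(x,s,0)=0$ are genuinely used---to extend across the critical set. Your route, once stated correctly, avoids that external reference and appears not to need $u\in\mathcal{C}^1$ at all, only $W^{1,\infty}_{loc}$.

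One small caution: your perturbation $\hat\phi=\phi-|x-x_0|^4$ need not preserve the condition $\nabla\hat\phi\neq 0$ off $x_0$ globally (you could create new critical points). The paper simply assumes strict touching from the outset, which is the standard reduction; if you keep the explicit subtraction, you should note that on a sufficiently small ball the perturbation is lower order relative to $\nabla\phi$ (in the case $\nabla\phi(x_0)\neq 0$) and handle the singular case $\nabla\phi(x_0)=0$ separately.
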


\begin{remark}According to recent results (see \cite{Tom}), it is possible to weak the locally Lipschitz assumption in Theorem \ref{converse} when $f$ takes some particular forms or it satisfies extra convexity and coercivity assumptions. For instance, as a consequence of the results in \cite{Tom}, if
$$f(x, \eta)= c|\eta|^{q}+ \phi(x), \quad \phi > 0,\,c\in \mathbb{R},\,\eta \in \mathbb{R}^{n} \textnormal{ and }q \in [1,p],$$then Theorem \ref{converse} holds for bounded supersolutions $u$ in $W^{1, p}(\Omega)\cap \mathcal{C}(\Omega)$. It is also a consequence of \cite[Th. 1.3]{Tom}, that the same conclusion is obtained when 
$$f(x, s, \eta)=h(s)|\eta|^{p-1}+\phi(x)$$where $h \geq 0$ is decreasing and $\phi \geq 0$.
\end{remark}

In view of the available regularity theory for weak solution of \eqref{1}, we have the following equivalence.
\begin{corollary}Let $1 < p < \infty$. Assume that  $f=f(x,s,\eta)$ is uniformly continuous, locally Lipschitz in $\eta$, non-increasing in $s$ and satisfies the growth condition \eqref{growth cond}. Additionally, assume that $f(x, s, 0)=0$ for $x\in \Omega$ and $s \in \mathbb{R}$ when $p >2$. Then  $u$ is a weak solution to \eqref{1} if and only if it is a viscosity solution to \eqref{1}. 
\end{corollary}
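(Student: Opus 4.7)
The plan is to derive the Corollary directly from the two main theorems of the paper combined with the classical interior regularity theory for weak solutions of $p$-Laplace type equations (the results of DiBenedetto and Tolksdorf cited as \cite{DB, P}). The hypotheses on $f$ in the Corollary are precisely the union of those appearing in Theorem \ref{nonSing} and Theorem \ref{converse}, so the only nontrivial point is matching the a priori regularity.

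For the direction viscosity $\Rightarrow$ weak, I would simply invoke Theorem \ref{nonSing} once for the supersolution property and once (via the standard symmetry $u \mapsto -u$ with the transformed nonlinearity $-f(x,-s,-\eta)$) for the subsolution property. The local boundedness required by Theorem \ref{nonSing} is automatic here because a viscosity solution in our sense is both LSC and USC, hence continuous and thus in $L^{\infty}_{loc}(\Omega)$. All growth, monotonicity, and continuity assumptions on $f$ in the Corollary match the hypotheses of Theorem \ref{nonSing}.

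For the converse direction weak $\Rightarrow$ viscosity, the issue is that Theorem \ref{converse} requires $u \in W^{1,\infty}_{loc}(\Omega)$ (case (i)) or $u \in \mathcal{C}^{1}(\Omega)$ (case (ii)), whereas a weak solution is only a priori in $W^{1,p}_{loc}(\Omega)$. This is where I would quote the interior $\mathcal{C}^{1,\alpha}_{loc}$ regularity for weak solutions of \eqref{1} under the growth condition \eqref{growth cond}, as given in \cite{DB, P}; the uniform continuity of $f$ and the bound \eqref{growth cond} are exactly what the standard regularity theory needs. This regularity places $u$ simultaneously in $\mathcal{C}^{1}(\Omega)$ and in $W^{1,\infty}_{loc}(\Omega)$, after which the proof splits into two cases: if $1 < p \leq 2$, part (i) of Theorem \ref{converse} applies; if $p > 2$, the extra hypothesis $f(x,s,0)=0$ assumed in the Corollary allows one to apply part (ii) of Theorem \ref{converse}. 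Part (iii) is not needed and in fact is subsumed by (ii) under our sign hypothesis on $f$.

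I do not foresee any genuine obstacle: the work has been done in Theorems \ref{nonSing} and \ref{converse}, and the only external input is the $\mathcal{C}^{1,\alpha}_{loc}$ regularity, which is standard for the nonlinearities considered. The only point requiring a little care is writing the symmetric statements for subsolutions (invoking Theorem \ref{nonSing} and Theorem \ref{converse} for $-u$ with the transformed right-hand side), but this is routine and preserves all the assumed structural properties of $f$ (continuity, monotonicity in $s$, local Lipschitz in $\eta$, and the growth condition).
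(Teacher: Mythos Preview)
Your proposal is correct and matches the paper's approach exactly: the paper states the Corollary with the preface ``In view of the available regularity theory for weak solution of \eqref{1}'' and gives no further proof, so invoking $\mathcal{C}^{1,\alpha}_{loc}$ regularity from \cite{DB, P} to feed into Theorem~\ref{converse}~(i)--(ii), together with Theorem~\ref{nonSing} for the other direction, is precisely what is intended. Your observation that a viscosity solution is automatically continuous (hence in $L^\infty_{loc}$) and your handling of the $u\mapsto -u$ symmetry are the only details left implicit, and both are routine.
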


We point out that, in the degenerate case, it is possible to remove the assumption $f(x,s,0)=0$ by imposing the non-vanishing of the gradient of the weak solution in the whole $\Omega$. This is a straightforward consequence of Theorem \ref{converse} (iii). 

In the particular case where $f$ does not depend on $\eta$, we have the following converse to Theorem \ref{nonSing}   which does not require the locally-Lipschitz regularity of the solutions.

\begin{theorem}\label{nice converse}
Let $1 < p < \infty$. Suppose that $f=f(x, s)$ is continuous in $\Omega \times \mathbb{R}$ and non-increasing in $s$.   If $u \in  W^{1, p}_{loc}(\Omega) \cap \mathcal{C}(\Omega)$ is a weak supersolution to \eqref{1}, then it is a viscosity supersolution to \eqref{1}.
\end{theorem}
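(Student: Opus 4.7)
The plan is proof by contradiction using the weak comparison principle for the $p$-Laplacian with a frozen right-hand side. Suppose $u$ is not a viscosity supersolution: there exist $x_0\in\Omega$ and a test function $\phi\in\mathcal{C}^2(\Omega)$ with $\phi(x_0)=u(x_0)$, $\phi\leq u$, $\nabla\phi(x)\neq 0$ for $x\neq x_0$, and $\limsup_{x\to x_0}(-\Delta_p\phi(x))<f(x_0,u(x_0))$. After a standard perturbation (e.g.\ replacing $\phi$ by $\phi-\sigma|x-x_0|^2$ for a generic small $\sigma>0$) I may assume the touching is strict on $\overline{B_r}(x_0)$ for some small $r$, i.e.\ $u>\phi$ on $\overline{B_r}\setminus\{x_0\}$, while preserving the non-vanishing-gradient and strict limsup conditions on $B_r\setminus\{x_0\}$.

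Using continuity of $f$ and $u$, I shrink $r$ to obtain $\delta>0$ with $-\Delta_p\phi(x)\leq f(x,u(x))-\delta$ pointwise on $B_r\setminus\{x_0\}$. The heart of the proof is to upgrade this pointwise inequality to a weak subsolution statement on the whole ball. For a nonnegative $\psi\in\mathcal{C}_0^\infty(B_r)$, I multiply by a cutoff $\eta_\epsilon$ vanishing on $B_{\epsilon/2}(x_0)$, equal to $1$ outside $B_\epsilon(x_0)$, with $|\nabla\eta_\epsilon|\leq C/\epsilon$. Since $\psi\eta_\epsilon$ is supported away from $x_0$, classical integration by parts applied to the pointwise inequality yields
\begin{equation*}
\int_{B_r}|\nabla\phi|^{p-2}\nabla\phi\cdot\nabla(\psi\eta_\epsilon)\,dx\leq\int_{B_r}\bigl(f(x,u(x))-\delta\bigr)\,\psi\eta_\epsilon\,dx.
\end{equation*}
Expanding $\nabla(\psi\eta_\epsilon)=\eta_\epsilon\nabla\psi+\psi\nabla\eta_\epsilon$, the cross term with $\nabla\eta_\epsilon$ is of order $\epsilon^{n-1}$ because $|\nabla\phi|^{p-1}$ is bounded on $B_r$ (using $\phi\in\mathcal{C}^2$ and $p>1$), while the remaining integrals pass to the limit by dominated convergence. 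Sending $\epsilon\to 0$ shows that $\phi$ is a strict weak subsolution of $-\Delta_p v=f(x,u(x))$ on $B_r$.

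To close the argument I pick $0<c<\min_{\partial B_r}(u-\phi)$, positive by the strict touching; the lifted function $\tilde\phi:=\phi+c$ has the same gradient as $\phi$, so it is still a weak subsolution of the frozen equation, and satisfies $\tilde\phi\leq u$ on $\partial B_r$ but $\tilde\phi(x_0)>u(x_0)$. Since the original equation coincides with the frozen one when evaluated at $u$, the function $u$ is a weak supersolution of the frozen problem as well. The classical weak comparison principle for the $p$-Laplacian with a fixed right-hand side (test with $(\tilde\phi-u)_+\in W_0^{1,p}(B_r)$ and use strict monotonicity of $\eta\mapsto|\eta|^{p-2}\eta$) then forces $\tilde\phi\leq u$ on $B_r$, contradicting $\tilde\phi(x_0)>u(x_0)$. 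The principal obstacle is the passage from a classical pointwise subsolution on $B_r\setminus\{x_0\}$ to a genuine weak subsolution on the whole ball, because in the singular range $1<p<2$ the quantity $\Delta_p\phi$ may fail to be locally integrable near $x_0$; the cutoff argument handles this by restricting to the regular set and exploiting that only $|\nabla\phi|^{p-1}$, not $|\nabla\phi|^{p-2}$, enters the surface error.
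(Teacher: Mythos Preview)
Your proof is correct and follows the same route as the paper's: argue by contradiction, derive the pointwise inequality $-\Delta_p\varphi \le f(x,u(x))$ on a punctured ball, lift $\varphi$ by a positive constant, and apply a comparison principle for the frozen equation $-\Delta_p v = f(x,u(x))$ to contradict the strict touching at $x_0$. You additionally make explicit the cutoff argument that removes the possibly singular point $x_0$ in the range $1<p<2$ (a step the paper leaves implicit when asserting that $\tilde\varphi$ is a weak subsolution) and you replace the citation of Theorem~\ref{nice case} by the direct monotonicity test with $(\tilde\phi-u)_+$, but these are refinements within the same strategy rather than a different approach.
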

\medskip

Let us briefly discuss the above hypotheses on $f$. Firstly,  assuming that $f$ is non-increasing and introducing the operator
\begin{equation}
F(x, s, \eta, \mathcal{X}) :=-|\eta|^{p-2}\left( \mbox{tr}(\mathcal{X})+\frac{p-2}{|\eta|^{2}}\mathcal{X}\eta \cdot \eta \right)-f(x, s, \eta), \quad p\geq 2,
\end{equation}we derive that  $F$ is proper, that is, $F$ is non-increasing in $\mathcal{X}$ and non-decreasing in $s$, which is a standard and useful assumption in the theory of viscosity solutions \cite{CIL}. For instance, it allows to get the equivalence between classical solutions ($\mathcal{C}^{2}$ functions which satisfy the equations pointwise) and  $\mathcal{C}^{2}$ viscosity solutions. On the other hand, the growth property \eqref{growth cond} implies the $\mathcal{C}^{1, \alpha}$-regularity of weak solutions to \eqref{1} (see \cite{DB, RZ, P}). Moreover, under a regular Dirichlet boundary condition $\varphi \in \mathcal{C}^{1, \alpha}$, it follows the $\mathcal{C}^{1, \alpha}$-regularity up to the boundary of weak solutions. For further details, see the reference \cite{Lib}. Finally, the extra assumption $f(x, s, 0)=0$ appearing in Theorem \ref{converse} in the degenerate case is used to remove critical sets of points of the weak solution (see reference \cite{JL2}). Hence, it allows the application of comparison results without assuming the non-vanishing of the gradients. We point out that other properties of $f=f(x,s,\eta)$, as more regularity on $s$ and $\eta$ and convexity-like conditions, may be employed to ensure comparison for weak solutions. We refer the reader to \cite{Tom} and the references therein for more details. 

It is worth to mention that many equations appearing in the literature have the structure of \eqref{1} with lower order term satisfying the above assumptions on $f$.  We refer the reader to \cite{MMS, PS, DS, CT} and the references therein for examples of such $f$. 

\medskip

The paper is organized as follows: in Section \ref{prel} we provide some preliminary results concerning properties of infimal convolutions (which will be the main tool in the proof of Theorem \ref{nonSing}) and a convergence result. In addition, we prove a Caccioppoli type estimate that will provide important uniform bounds, fundamental when using approximation arguments. This result is interesting itself.

Section \ref{theo} contains the proof of the main result of the paper, Theorem \ref{nonSing}, that states under which conditions on the non-homogenous function $f$ in \eqref{1} viscosity solutions are actually weak solutions. This proof is divided into two major cases: the singular and the degenerate scenario, since, although both cases rely on the same idea, different approximations and estimates are needed depending on the range of $p$.

In Section \ref{Proof converse} we prove the reverse statement, that is, weak solutions of \eqref{1} are viscosity solutions. This result is based on comparison arguments, and this will determine the conditions we will need to impose on $f$. Finally, in the Appendix we give, for the sake of completeness,  precise references and state the comparison results that we use in the previous section.

\section{Preliminary results}\label{prel}

\subsection{Infimal convolution} Let us define the infimal convolution of a function $u$ as
\begin{equation}\label{infconv}
u_\varepsilon(x):=\inf_{y\in\Omega}\left(u(y)+\frac{|x-y|^q}{q\varepsilon^{q-1}}\right),
\end{equation}
where $q\geq 2$ and $\varepsilon >0$. 

We recall some useful properties of $u_\varepsilon$. Let $u: \Omega \to \mathbb{R}$ be bounded and lower semicontinuous in $\Omega$. It is well-known that  $u_\varepsilon$ is an increasing sequence of  semiconcave  functions in $\Omega$,  which converges pointwise to $u$. Hence, $u_\varepsilon$ is locally Lipschitz and  twice differentiable a.e. in $\Omega$. Moreover, it is possible to write

$$u_\varepsilon(x)= \inf_{y \in B_{r(\varepsilon)}(x)\cap \Omega}\Big( u(y)+ \dfrac{|x-y|^{q}}{q\varepsilon^{q-1}}\Big),$$for $r(\varepsilon) \to 0$ as $\varepsilon \to 0$. For these and further properties see  \cite[Lemma A.1.]{JJ} and \cite{CIL}.

The next lemma is the counterpart of  \cite[Lemma A.1. (iii)]{JJ}  for our setting.

\begin{lemma} \label{infConvLemma}
Suppose that $u: \Omega \to \mathbb{R}$ is bounded and lower semicontinuous in $\Omega$. Let $f=f(x, s, \eta)$ be continuous in $\Omega \times \mathbb{R}\times \mathbb{R}^{n}$ and non-increasing in $s$. If $u$ is a viscosity supersolution to
\begin{equation*}\label{eq 12}
-\Delta_pu=f(x, u, \nabla u)
\end{equation*}
in $\Omega$ for $1<p<\infty$, then $u_\varepsilon$ is a viscosity supersolution to
$$-\Delta_pu_\varepsilon=f_\varepsilon(x, u_\varepsilon, \nabla u_\varepsilon)$$
in $\Omega_\varepsilon:=\{x \in \Omega: \textnormal{dist}(x, \partial \Omega)>r(\varepsilon)\}$, where
$$f_\varepsilon(x, s, \eta):= \inf_{y\in B_{r(\varepsilon)}(x)}f(y, s, \eta).$$

\end{lemma}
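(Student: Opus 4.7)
\medskip
\noindent\textbf{Proof plan.} The plan is a classical translation argument, adapted to the semicontinuous formulation of the equation. Fix $x_0 \in \Omega_\varepsilon$ and let $\phi \in \mathcal{C}^2(\Omega_\varepsilon)$ satisfy $\phi(x_0) = u_\varepsilon(x_0)$, $\phi \leq u_\varepsilon$ near $x_0$, and $\nabla \phi(x) \neq 0$ for $x \neq x_0$. Since $u$ is lower semicontinuous and the infimum defining $u_\varepsilon(x_0)$ is taken over a compact set inside $\Omega$, there exists $y_0 \in \overline{B_{r(\varepsilon)}(x_0)} \cap \Omega$ with
$$u_\varepsilon(x_0) = u(y_0) + \frac{|x_0-y_0|^q}{q\varepsilon^{q-1}}.$$
Because $x_0 \in \Omega_\varepsilon$, the point $y_0$ lies in the interior of $\Omega$.

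The key step is to build a test function for $u$ at $y_0$ by translating $\phi$. For $x$ close to $x_0$, setting $y = y_0 + (x-x_0)$ in the definition of $u_\varepsilon$ gives $u_\varepsilon(x) \leq u(y_0 + x - x_0) + |x_0 - y_0|^q/(q\varepsilon^{q-1})$. Combining with $\phi(x) \leq u_\varepsilon(x)$ and making the change of variables $z = y_0 + x - x_0$, I define
$$\tilde\phi(z) := \phi(z - y_0 + x_0) - \frac{|x_0-y_0|^q}{q\varepsilon^{q-1}}.$$
Then $\tilde\phi \leq u$ near $y_0$ with equality at $z = y_0$, $\nabla \tilde\phi(z) = \nabla \phi(z - y_0 + x_0)$ is nonzero for $z \neq y_0$, and $\tilde\phi \in \mathcal{C}^2$ on a neighborhood of $y_0$. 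Thus $\tilde\phi$ is an admissible test function for $u$ at $y_0$.

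Applying the viscosity supersolution property of $u$ and invoking the translation invariance $-\Delta_p \tilde\phi(z) = -\Delta_p \phi(z - y_0 + x_0)$, I obtain
$$\lim_{r\to 0} \sup_{w \in B_r(x_0)\setminus\{x_0\}}(-\Delta_p \phi(w)) = \lim_{r\to 0} \sup_{z \in B_r(y_0)\setminus\{y_0\}}(-\Delta_p \tilde\phi(z)) \geq f(y_0, u(y_0), \nabla\phi(x_0)).$$
Finally, since $u(y_0) \leq u_\varepsilon(x_0)$ and $f$ is non-increasing in $s$, the right-hand side is bounded below by $f(y_0, u_\varepsilon(x_0), \nabla \phi(x_0))$, and since $y_0 \in B_{r(\varepsilon)}(x_0)$, by $f_\varepsilon(x_0, u_\varepsilon(x_0), \nabla \phi(x_0))$.

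I do not expect any serious obstacle here; the subtlety to check is just that the translated function $\tilde\phi$ retains the property $\nabla \tilde\phi \neq 0$ away from the touching point (inherited from $\phi$), which is exactly what the semicontinuous viscosity formulation in \eqref{limsup} requires.
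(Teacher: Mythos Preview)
Your proof is correct and follows the same translation-of-test-function idea as the paper. The only execution difference is that you invoke attainment of the infimum in $u_\varepsilon(x_0)$ (legitimate here since $u$ is lower semicontinuous on the compact set $\overline{B_{r(\varepsilon)}(x_0)}\subset\Omega$) to select the translate directly, whereas the paper first shows that every translate $x\mapsto u(x+z)+|z|^q/(q\varepsilon^{q-1})$ is a supersolution of $-\Delta_p v=f_\varepsilon$ and then passes from ``infimum of supersolutions'' to ``supersolution'' via an approximating minimizing sequence; the continuity of $f$ handles the borderline possibility $y_0\in\partial B_{r(\varepsilon)}(x_0)$ in your last step.
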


\begin{proof}
We start by noticing  that 
$$u_\varepsilon(x)= \inf_{z \in B_{r(\varepsilon)}(0)}\Big( u(z+x)+\frac{|z|^{q}}{q\varepsilon^{q-1}}\Big),\qquad x\in\Omega_\varepsilon.$$
Let us see first that for every $z \in B_{r(\varepsilon)}(0)$, the function
$$\phi_z(x):= u(z+x)+\frac{|z|^{q}}{q\varepsilon^{q-1}}$$
is a viscosity supersolution to $-\Delta_p \phi_z =f_\varepsilon$ in $\Omega_\varepsilon$. Indeed, let $x_0 \in \Omega_\varepsilon$ and $\varphi\in \mathcal{C}^{2}(\Omega_\varepsilon)$ so that
$$\min_{\Omega_\varepsilon}(\phi_z-\varphi ) = ( \phi_z-\varphi)(x_0)=0.$$
We assume that $\nabla \varphi(x) \neq 0$ for all $x\neq x_0$ if $1<p<2$. Making $y:=z+x$, $y_0:=z+x_0$ and $$\tilde{\varphi}(y):= \varphi(y-z)-\frac{|z|^{q}}{q\varepsilon^{q-1}},$$ we derive that $u-\tilde{\varphi}$ has a local minimum at $y_0$, and indeed $(u-\tilde{\varphi})(y_0)=0$. Since $u$ is a viscosity supersolution to \eqref{eq 12}, it follows
$$\lim_{\rho\rightarrow 0}\sup_{x\in B_\rho(y_0)\setminus\{y_0\}}(-\Delta_p\tilde{\varphi}(x))\geq f(y_0, \tilde{\varphi}(y_0), \nabla \tilde{\varphi}(y_0)).$$Therefore,
\begin{equation}\label{inequality}
\begin{split}
\lim_{\rho\rightarrow 0}\sup_{x\in B_\rho(x_0)\setminus\{x_0\}}(-\Delta_p\varphi(x))&= \lim_{\rho\rightarrow 0}\sup_{x\in B_\rho(y_0)\setminus\{y_0\}}(-\Delta_p\tilde{\varphi}(x))\\&\geq f(y_0, \tilde{\varphi}(y_0), \nabla \tilde{\varphi}(y_0))\\ &= f(z+x_0, \tilde{\varphi}(z+x_0), \nabla \varphi(x_0))\\ & = f\left(z+x_0, \varphi(x_0)-\frac{|z|^{q}}{q\varepsilon^{q-1}}, \nabla \varphi(x_0)\right) \\& \geq  f\left(z+x_0, \varphi(x_0), \nabla \varphi(x_0)\right)\\ &\geq f_\varepsilon(x_0, \varphi(x_0), \nabla \varphi(x_0)),
\end{split}
\end{equation}where we have used that $f$ is non-increasing in the second variable. Let us see now that, since $u_\varepsilon$ is an infimum of supersolutions, it is itself a supersolution (observe that $u_\varepsilon$ is continuous, since it is locally Lipschitz).  Let $x_0 \in \Omega_\varepsilon$ and $\phi \in \mathcal{C}^{2}(\Omega_\varepsilon)$ so that
\begin{equation}\label{min}
\min_{\Omega_\varepsilon}(u_\varepsilon-\phi)=(u_\varepsilon-\phi)(x_0)=0.
\end{equation}
Again, $\nabla \phi(x)\neq 0$ for all $x\neq x_0$ in the singular scenario. Moreover, we may assume that the minimum is strict.  For each $n$, there exists $z_n \in B_{r(\varepsilon)}(0)$ such that

\begin{equation}\label{unon}
u(z_n+x_0)+ \frac{|z_n|^{q}}{q\varepsilon^{q-1}}<u_\varepsilon(x_0)+ \frac{1}{n}.
\end{equation} 
Let $x_n$ be a sequence of points in $\overline{B}_r(x_0) \subset \Omega_\varepsilon$ so that
\begin{equation*}\label{minpn}
u(z_n+x_n)+ \frac{|z_n|^{q}}{q\varepsilon^{q-1}}-\phi(x_n)\leq u(z_n+x)+ \frac{|z_n|^{q}}{q\varepsilon^{q-1}}-\phi(x)
\end{equation*}
for all $x \in \overline{B}_r(x_0)$, i.e., $(\phi_{z_n}-\phi)$ has a minimum in $\overline{B}_r(x_0)$ at $x_n$. Up to a subsequence, $x_n \to y_0$ as $n\to \infty$. Furthermore, by \eqref{unon},
\begin{equation}\begin{split}\label{u phi}
u_\varepsilon(x_n)-\phi(x_n)&\leq u(z_n+x_n)+ \frac{|z_n|^{q}}{q\varepsilon^{q-1}} -\phi(x_n)\\
&\leq u(z_n+x_0) + \frac{|z_n|^{q}}{q\varepsilon^{q-1}}-\phi(x_0)\\
&\leq u_\varepsilon(x_0)+\frac{1}{n}-\phi(x_0).
\end{split}\end{equation}
Taking liminf and using the lower semicontinuity of $u_\varepsilon$, we derive
$$u_\varepsilon(y_0)-\phi(y_0)\leq u_\varepsilon(x_0)-\phi(x_0).$$
Since the minimum in \eqref{min} is strict, we must have $y_0=x_0$. Moreover, taking 
$$\varphi(x):=\phi(x)+(\phi_{z_n}-\phi)(x_n),$$
in  \eqref{inequality} we have
$$\lim_{\rho\rightarrow 0}\sup_{x\in B_\rho(x_n)\setminus\{x_n\}}(-\Delta_p\phi(x))\geq f\Big(z_n+x_n, u(z_n+x_n)+\frac{|z_n|^{q}}{q\varepsilon^{q-1}}, \nabla \phi(x_n)\Big) .$$
Since $f$ is non increasing with respect to the second variable, by \eqref{u phi} we obtain
$$\lim_{\rho\rightarrow 0}\sup_{x\in B_\rho(x_n)\setminus\{x_n\}}(-\Delta_p\phi(x))\geq f\Big(z_n+x_n, u_\varepsilon(x_0)+\frac{1}{n}-\phi(x_0)+\phi(x_n), \nabla \phi(x_n)\Big).$$As $n\to \infty$, there holds
$$\lim_{\rho\rightarrow 0}\sup_{x\in B_\rho(x_0)\setminus\{x_0\}}(-\Delta_p\phi(x))\geq f(z'+x_0, u_\varepsilon(x_0), \nabla \phi(x_0)),$$for some $z' \in \overline{B_r(0)}$. Therefore
$$\lim_{\rho\rightarrow 0}\sup_{x\in B_\rho(x_0)\setminus\{x_0\}}(-\Delta_p\phi(x))\geq f_\varepsilon(x_0, \phi(x_0), \nabla \phi(x_0)),$$ 
and we conclude that $u_\varepsilon$ is a viscosity supersolution of
$$-\Delta_pu_\varepsilon=f_\varepsilon(x, u_\varepsilon, \nabla u_\varepsilon)\,\hbox{ in }\Omega_\varepsilon.$$
\end{proof}

The next lemma states the weak convergence of the lower order terms in the particular situation of infimal convolutions.

\begin{lemma}\label{convf}
Let $f=f(x,s,\eta)$ be a uniformly continuous function, which satisfies the growth condition \eqref{growth cond}.  Assume that $u \in W^{1, p}_{loc}(\Omega)$ is locally bounded and lower semicontinuous in $\Omega$. For each $\varepsilon>0$ define $u_\varepsilon$ as in \eqref{infconv} and $f_\varepsilon$ as in Lemma \ref{infConvLemma}. 

Then, if $\nabla u_\varepsilon$ converges to $\nabla u$ in $L^{p}_{loc}(\Omega)$, the following holds
$$\lim_{\varepsilon\rightarrow 0}\int_\Omega{f_\varepsilon(x,u_\varepsilon,\nabla u_\varepsilon)\psi\,dx}=\int_\Omega{f(x,u,\nabla u)\psi\,dx} $$for every non negative $\psi\in\mathcal{C}_0^\infty(\Omega)$.
\end{lemma}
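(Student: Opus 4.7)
My plan is to combine pointwise a.e.\ convergence of the integrand with an $L^1$-convergent dominating sequence, and then invoke the generalized dominated convergence theorem to pass to the limit. A standard subsequence--subsequence argument will upgrade convergence along subsequences to convergence of the full sequence, so I may replace $u_\varepsilon$ by any subsequence. Fix a non-negative $\psi \in \mathcal{C}_0^{\infty}(\Omega)$, set $K := \operatorname{supp}\psi$, and choose a compact $K' \Subset \Omega$ with $K + \overline{B_{r(\varepsilon)}(0)} \subset K'$ for all sufficiently small $\varepsilon$.

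Using the hypothesis that $\nabla u_\varepsilon \to \nabla u$ in $L^p(K)$, I first pass to a subsequence (not relabeled) along which $\nabla u_\varepsilon \to \nabla u$ a.e.\ on $K$. From the properties of infimal convolutions recalled in the preliminaries, $u_\varepsilon \to u$ pointwise on $\Omega$ with $\|u_\varepsilon\|_{L^\infty(K)} \le \|u\|_{L^\infty(K')} =: M$. For the pointwise limit of the integrand, I split
\[
|f_\varepsilon(x, u_\varepsilon, \nabla u_\varepsilon) - f(x, u, \nabla u)| \le |f_\varepsilon(x, u_\varepsilon, \nabla u_\varepsilon) - f(x, u_\varepsilon, \nabla u_\varepsilon)| + |f(x, u_\varepsilon, \nabla u_\varepsilon) - f(x, u, \nabla u)|.
\]
Writing $\omega_f$ for the modulus of continuity of $f$, the first term is controlled by $\omega_f(r(\varepsilon))$ (the infimum defining $f_\varepsilon$ runs over $B_{r(\varepsilon)}(x)$), and the second by $\omega_f(|u_\varepsilon(x) - u(x)| + |\nabla u_\varepsilon(x) - \nabla u(x)|)$; both tend to zero at a.e.\ $x \in K$.

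For the dominating sequence, the symmetric growth bound $|f(y,s,\eta)| \le \gamma(|s|)|\eta|^{p-1} + \|\phi\|_{L^\infty(K')}$ survives the infimum over $y \in B_{r(\varepsilon)}(x) \subset K'$, giving
\[
|f_\varepsilon(x, u_\varepsilon(x), \nabla u_\varepsilon(x))| \le \gamma(M)\,|\nabla u_\varepsilon(x)|^{p-1} + \|\phi\|_{L^\infty(K')} \quad \text{on } K.
\]
It remains to check that the envelope $g_\varepsilon(x) := \gamma(M)|\nabla u_\varepsilon(x)|^{p-1} + \|\phi\|_{L^\infty(K')}$ converges in $L^1(K)$. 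The hypothesis $\nabla u_\varepsilon \to \nabla u$ in $L^p(K)$ yields equi-integrability of $\{|\nabla u_\varepsilon|^p\}$ on $K$ and hence, by H\"older on a set of finite measure, equi-integrability of $\{|\nabla u_\varepsilon|^{p-1}\}$; combined with a.e.\ convergence, Vitali's convergence theorem delivers $|\nabla u_\varepsilon|^{p-1} \to |\nabla u|^{p-1}$ in $L^1(K)$.

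Having pointwise a.e.\ convergence together with an $L^1$-convergent dominant, the generalized dominated convergence theorem gives
\[
\int_\Omega f_\varepsilon(x, u_\varepsilon, \nabla u_\varepsilon)\,\psi\,dx \to \int_\Omega f(x, u, \nabla u)\,\psi\,dx
\]
along the subsequence, and hence along the whole sequence. The main point requiring care is the handling of the infimum in $f_\varepsilon$: for the pointwise limit one exploits the uniform continuity of $f$ in $x$, and for the $L^1$ envelope one exploits that the \emph{two-sided} form of \eqref{growth cond} is what is preserved under the infimum (the one-sided upper bound alone would not suffice to control $|f_\varepsilon|$). Once these two observations are in place, the uniform local boundedness of $u_\varepsilon$ (controlling $\gamma(|u_\varepsilon|)$) and the $L^p$ convergence of the gradients fit together smoothly, reducing the argument essentially to the $x$-only case of \cite{JJ}.
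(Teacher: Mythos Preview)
Your argument is correct and follows essentially the same two-step decomposition as the paper's proof: first control $|f_\varepsilon(x,u_\varepsilon,\nabla u_\varepsilon)-f(x,u_\varepsilon,\nabla u_\varepsilon)|$ uniformly via the modulus of continuity of $f$ in $x$, then pass to the limit in $f(x,u_\varepsilon,\nabla u_\varepsilon)$ using the growth bound and the $L^p_{loc}$ convergence of the gradients. Your version is in fact slightly more careful than the paper's: the paper invokes ``Lebesgue dominated convergence'' with the $\varepsilon$-dependent majorant $C|\nabla u_\varepsilon|^{p-1}+\phi$ and only records a uniform $L^1$ bound on $|\nabla u_\varepsilon|^{p-1}$, whereas you correctly note that one needs the generalized dominated convergence theorem with an $L^1$-convergent dominating sequence, and you supply the missing subsequence extraction for a.e.\ convergence of $\nabla u_\varepsilon$ together with the subsequence--subsequence argument to recover the full limit. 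One cosmetic point: since $\gamma$ is merely continuous and not assumed monotone, your ``$\gamma(M)$'' should be read as $\max_{[0,M]}\gamma$.
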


\begin{proof}
Let $\psi\in \mathcal{C}_0^\infty(\Omega)$ and denote $K:=$spt$(\psi)$. Consider $\varepsilon>0$ small enough so that 
$$K\subset K'\subset \Omega,$$
where $K':=\overline{\cup_{x\in K}B_{r(\varepsilon)}(x)}$. Since $f$ is uniformly continuous in $K' \times \mathbb{R} \times \mathbb{R}^{n}$, for every $\rho>0$ there exists $\delta>0$ such that
$$|f(x,u_\varepsilon(x),\nabla u_\varepsilon(x))-f(y,u_\varepsilon(x),\nabla u_\varepsilon(x))|<\rho\qquad \hbox{ if }|x-y|<\delta,\;\;x,y\in K'.$$
Choose $\varepsilon_0>0$ so that $r(\varepsilon)<\delta$ for every $\varepsilon<\varepsilon_0$. Thus, from the previous inequality we get
$$f(x,u_\varepsilon(x),\nabla u_\varepsilon(x))<\rho+f(y,u_\varepsilon(x),\nabla u_\varepsilon(x)),$$
for every $x\in K$ and $y\in B_{r(\varepsilon)}(x)$. In particular,
$$f(x,u_\varepsilon(x),\nabla u_\varepsilon(x))<\rho+f_\varepsilon(x,u_\varepsilon(x),\nabla u_\varepsilon(x)),$$
and therefore
$$0 \leq |f(x,u_\varepsilon(x),\nabla u_\varepsilon(x))-f_\varepsilon(x,u_\varepsilon(x),\nabla u_\varepsilon(x))|<\rho.$$
Hence we arrive at the estimate
\begin{equation}\label{estimate f}
\int_\Omega |f(x,u_\varepsilon,\nabla u_\varepsilon)-f_\varepsilon(x,u_\varepsilon,\nabla u_\varepsilon)|\psi\,dx\leq \rho \|\psi\|_{L^{\infty}(K)}|K|.
\end{equation}On the other hand, due to the continuity of $f$ and the convergences of $u_\varepsilon$ and $\nabla u_\varepsilon$,
$$f(x, u_\varepsilon(x), \nabla u_\varepsilon (x)) \to f(x, u(x), \nabla u(x)) \quad \mbox{a. e. in }\Omega.$$
Observe that
$$u_{\varepsilon_0} \leq u_\varepsilon \leq u,\quad \textnormal{ for all }\varepsilon \leq \varepsilon_0.$$Since $u_{\varepsilon_0}$, $u$ belong to $L^{\infty}_{loc}(\Omega)$, there exists a uniform constant $C>0$ so that
$$\|u_\varepsilon\|_{L^{\infty}(K)}\leq C, \quad \varepsilon \leq \varepsilon_0.$$
Thus, in view of the growth estimate on $f$ and the continuity of $\gamma$, we have, for an appropriate positive constant $C$,
\begin{equation}\label{gf}
|f(x, u_\varepsilon(x), \nabla u_\varepsilon(x))| \leq C |\nabla u_\varepsilon(x)|^{p-1}+\phi(x),
\end{equation}Since $|\nabla u_\varepsilon|^{p-1} \in L^{p/(p-1)}_{loc}(\Omega)$, H\"{o}lder inequality and the strong convergence of $\nabla u_\varepsilon$ imply
$$\int_K |\nabla u_\varepsilon|^{p-1} \leq C \|\nabla u_\varepsilon\|^{p-1}_{L^{p}(K)} \leq C, \mbox{ for all $\varepsilon$.}$$
By \eqref{estimate f}, \eqref{gf} and Lebesgue dominated convergence Theorem, we conclude
\begin{equation*}\label{estimate 2 f}
\lim_{\varepsilon \to 0}\int_K f(x, u_\varepsilon, \nabla u_\varepsilon)\psi\, dx= \int_Kf(x, u, \nabla u)\psi\, dx.
\end{equation*}
\end{proof}

\subsection{A Caccioppoli's estimate}In the next lemma we provide a Caccioppoli's estimate for the $L_{loc}^{p}$-norm of the gradients of weak solutions.

\begin{lemma}\label{caccioppoli} Let $u \in W^{1, p}(\Omega)$ be a locally bounded weak supersolution to \eqref{1}. Assume that  $f$ is continuous in $\Omega \times \mathbb{R}\times \mathbb{R}^{n}$ and satisfies the growth bound \eqref{growth cond}. Then there exists a constant $C=C(p, \Omega, \phi, \gamma) >0$ such that for all test function $\xi \in \mathcal{C}_0^{\infty}(\Omega)$, $0 \leq \xi \leq 1$, we have
\begin{equation*}
\int_\Omega |\nabla u|^{p}\xi^{p}dx \leq C\left[(\textnormal{osc}_{K}u)^{p}\int_\Omega \left(|\nabla \xi|^{p} + 1\right) dx +\textnormal{osc}_{K}u\right],
\end{equation*}where $\textnormal{osc}_{K}u:=\sup_{K}u-\inf_{K}u$, and $K:=$ \textnormal{spt}$(\xi)$.
\end{lemma}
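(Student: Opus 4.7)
The plan is to test the weak inequality with a carefully chosen non-negative function that produces $|\nabla u|^p \xi^p$ with a good sign, and then absorb all lower-order contributions using Young's inequality and the growth assumption on $f$.

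More precisely, I would set $M := \mathrm{ess\,sup}_K u$ (finite since $u$ is locally bounded) and take the test function
$$\psi := (M-u)\xi^p,$$
which is non-negative a.e., compactly supported in $\Omega$, and lies in $W^{1,p}_0(\Omega)$; by a standard approximation argument it is admissible in the weak-supersolution inequality. Computing $\nabla \psi = -\xi^p \nabla u + p\xi^{p-1}(M-u)\nabla\xi$ and rearranging, I obtain
\begin{equation*}
\int_\Omega |\nabla u|^p \xi^p \,dx \;\le\; p\int_\Omega (M-u)\,|\nabla u|^{p-2}\nabla u\cdot\nabla\xi\;\xi^{p-1}\,dx \;-\;\int_\Omega f(x,u,\nabla u)(M-u)\xi^p\,dx.
\end{equation*}

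Next I would handle each term on the right. For the gradient cross-term, bounding the dot product by $|\nabla u|^{p-1}|\nabla\xi|$ and applying Young's inequality with exponents $p/(p-1)$ and $p$ to the pair $\bigl(|\nabla u|^{p-1}\xi^{p-1},\,(M-u)|\nabla\xi|\bigr)$ yields, for any $\delta>0$,
$$p\!\int_\Omega (M-u)|\nabla u|^{p-2}\nabla u\cdot\nabla\xi\,\xi^{p-1}dx \le \delta\!\int_\Omega |\nabla u|^p\xi^p\,dx + C(p,\delta)(\mathrm{osc}_K u)^p\!\int_\Omega |\nabla\xi|^p dx,$$
where I have used $0 \le M-u \le \mathrm{osc}_K u$ on $K$. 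For the $f$-term, the growth condition \eqref{growth cond} combined with the local boundedness of $u$ (so that $\gamma(|u|)\le \gamma^* := \sup_{|s|\le \|u\|_{L^\infty(K)}}\gamma(s)$) gives
$$|f(x,u,\nabla u)| \le \gamma^*|\nabla u|^{p-1} + \phi(x) \quad \text{a.e. on }K.$$
Another application of Young's inequality to $\gamma^*|\nabla u|^{p-1}(M-u)\xi^p$ produces an absorbable term $\delta\int|\nabla u|^p\xi^p$ plus a contribution $C(\mathrm{osc}_K u)^p\int_\Omega \xi^p\,dx$, while the $\phi$ contribution is bounded crudely by $\|\phi\|_{L^\infty(K)}\,\mathrm{osc}_K u\cdot|K|$.

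Choosing $\delta$ so that the two absorbable terms together total at most $\tfrac12\int|\nabla u|^p\xi^p$ and transferring them to the left-hand side, the estimate
$$\int_\Omega |\nabla u|^p \xi^p\,dx \le C\!\left[(\mathrm{osc}_K u)^p\!\int_\Omega(|\nabla\xi|^p+1)\,dx \;+\; \mathrm{osc}_K u\right]$$
follows, with $C$ depending on $p$, $|\Omega|$, $\|\phi\|_{L^\infty_{loc}}$ and $\gamma^*$. The only genuinely delicate step is the bookkeeping for the $\gamma(|u|)|\nabla u|^{p-1}$ summand inside $f$: it is the term that forces a second use of Young's inequality and the implicit appearance of a local bound on $u$ inside the constant $C$; everything else is a routine manipulation of the weak inequality.
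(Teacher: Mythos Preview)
Your proof is correct and follows essentially the same route as the paper: the same test function $(\sup_K u - u)\xi^p$, the same rearrangement, and the same two applications of Young's inequality to absorb the cross term and the $\gamma(|u|)|\nabla u|^{p-1}$ contribution from the growth bound on $f$. The only cosmetic difference is that the paper bounds $f$ from below rather than $|f|$ from above, but this leads to the identical estimate.
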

\begin{proof}
Let $\xi \in \mathcal{C}^{\infty}_0(\Omega)$ and $K \subset \Omega$ as in the lemma. Consider the test function
$$\psi(x):=\left(\sup_{K} \,u - u(x)\right)\xi^{p}(x), \qquad x \in \Omega.$$Then,
\begin{equation*}
\begin{split}
&\int_\Omega f(x, u, \nabla u)\psi\,dx \leq  \int_\Omega |\nabla u|^{p-2}\nabla u\cdot \nabla \psi\,dx\\& \qquad \qquad \qquad \qquad=-\int_\Omega |\nabla u|^{p-2}\nabla u\cdot \left[ \xi^{p}\nabla u- p  \xi^{p-1}\nabla \xi\left(\sup_{K}u - u\right)\right] dx.
\end{split}
\end{equation*}Therefore,
\begin{equation}\label{first int}
\begin{split}
\int_\Omega |\nabla u|^{p}\xi^{p}dx &\leq p  \int_\Omega \xi^{p-1}|\nabla u|^{p-2}\nabla u \cdot\nabla \xi\left(\sup_{K}u - u\right) dx -\int_\Omega f(x, u, \nabla u)\psi\,dx.
\end{split}
\end{equation}Observe that
\begin{equation*}
\int_\Omega ||\nabla u|^{p-2}\nabla u| dx\leq \int_\Omega |\nabla u|^{p-1}dx,
\end{equation*}which shows that $|\nabla u|^{p-2}\nabla u \in L^{p/(p-1)}(\Omega)$. Hence, Young's inequality
$$ab \leq \delta a^{q}+ \delta^{-1/(q-1)}b^{q'},$$where $q$ and  $q'$ are conjugate exponents, implies that the first integral in  the right-hand side of \eqref{first int} may be bounded by
\begin{equation*}
\delta\int_\Omega |\nabla u|^{p}\xi^{p} +\delta^{1-p}\int_\Omega p^{p}|\nabla \xi|^{p}(\textnormal{osc}_{K}u)^{p}dx.
\end{equation*}Moreover, by \eqref{growth cond} we have
\begin{equation}\label{est f epsilon}
f(x, u(x), \nabla u(x)) \geq -\gamma_\infty|\nabla u(x)|^{p-1} - \|\phi\|_{L^{\infty}(K)},
\end{equation}for all $x$ in the support of $\xi$, where $\gamma_\infty:=\sup_{x\in K}{|\gamma(u(x))|}$. Therefore, the second integral in \eqref{first int} is estimated from above by
\begin{equation*}
\gamma_\infty\int_\Omega |\nabla u|^{p-1}\left(\sup_{K}u - u\right)\xi^{p}dx +C(\Omega, \phi)\textnormal{osc}_{K}u,
\end{equation*}where $C(\Omega, \phi)$ is a positive constant. The assumption $\xi \leq 1$ and Young's inequality  yield

\begin{equation*}
\begin{split}
\gamma_\infty\int_\Omega |\nabla u|^{p-1}\left(\sup_{K}u - u\right)\xi^{p}dx &\leq \gamma_\infty\int_\Omega |\nabla u|^{p-1}\left(\sup_{K}u - u\right)\xi^{p-1}dx\\&\leq \delta\int_\Omega|\nabla u|^{p} \xi^{p}dx + \delta^{1-p} C(p, \Omega, \gamma)(\textnormal{osc}_{K}u)^{p}.
\end{split}
\end{equation*}

Therefore,
\begin{equation*}
\begin{split}
&\int_\Omega |\nabla u|^{p}\xi^{p}dx \leq 2\delta\int_\Omega |\nabla u|^{p}\xi^{p}dx + 
\delta^{1-p}C(p, \Omega, \gamma, \phi)\left[(\textnormal{osc}_Ku)^{p}\int_\Omega\left(|\nabla \xi|^{p}+1\right)dx + \textnormal{osc}_Ku\right]
\end{split}
\end{equation*}Taking $\delta < 1/2$, we derive the Caccioppoli's estimate.
\end{proof}

\section{Proof of Theorem \ref{nonSing}}\label{theo}

\subsection{Degenerate case}  We begin with the range $p\geq2$.
\begin{proof}[Proof of Theorem \ref{nonSing}]
 Let $u_\varepsilon$ be the infimal convolution defined in \eqref{infconv} with $q=2$. Then,
$$\phi(x):=u_\varepsilon(x)-C|x|^{2}$$is concave in $\Omega_{r(\varepsilon)}$ (see \cite[Lemma A.2.]{JJ}). By Aleksandrov's Theorem, $\phi$ is twice differentiable almost everywhere in $\Omega_{r(\varepsilon)}$, and so $u_\varepsilon$. Therefore by Lemma \ref{infConvLemma},
$$-\Delta_p u_\varepsilon(x)\geq f_\varepsilon (x, u_\varepsilon(x), \nabla u_\varepsilon (x))$$
a.e. in $\Omega_{r(\varepsilon)}$. Furthermore,
$$\int_\Omega |\nabla u_\varepsilon|^{p-2}\nabla u_\varepsilon \cdot \nabla \psi \geq \int_\Omega (-\Delta_p u_\varepsilon)\psi,$$for all non-negative test function $\psi$  (see the proof of \cite[Theorem 3.1]{JJ}). Hence, we derive
\begin{equation*}
\int_{\Omega}f_\varepsilon(x, u_\varepsilon, \nabla u_\varepsilon)\psi\,dx \leq \int_\Omega |\nabla u_\varepsilon|^{p-2}\nabla u_\varepsilon\cdot \nabla \psi\, dx,
\end{equation*}for all non-negative test function $\psi$ and all $\varepsilon >0$. We claim that, as $\varepsilon \to 0$, there holds

\begin{equation*}
\int_{\Omega}f(x, u, \nabla u)\psi\,dx \leq \int_\Omega |\nabla u|^{p-2}\nabla u \cdot \nabla \psi\, dx.
\end{equation*}To prove the claim, observe first that Caccioppoli's estimate allows us to conclude that
$$|\nabla u_\varepsilon|^{p-2}\nabla u_\varepsilon$$converges weakly in $L^{p/(p-1)}_{loc}(\Omega)$. Indeed, for any compact set $K \subset \Omega$, choose an open set $U \subset \Omega$ containing $K$ and a non-negative test function $0 \leq \xi \leq 1$ so that
\begin{equation*}
K \subset K':=\mbox{spt }\xi \subset U,
\end{equation*} and $\xi = 1$ in $K$. Then
\begin{equation}\label{in 3}
\int_K ||\nabla u_\varepsilon|^{p-2}\nabla u_\varepsilon|^{p/(p-1)}dx \leq \int_K|\nabla u_\varepsilon|^{p}dx \leq \int_\Omega |\nabla u_\varepsilon|^{p}\xi^{p}dx.
\end{equation}Observe that since $f$ satisfies \eqref{growth cond}, the lower term $f_\varepsilon$ verifies the  bound \eqref{est f epsilon}. Therefore, Lemma \ref{caccioppoli} applies and the right-hand side of \eqref{in 3} is bounded from above by
\begin{equation}\label{caccio}
C\left[(\textnormal{osc}_{K'}u_\varepsilon)^{p}\int_\Omega \left(|\nabla \xi|^{p} + 1\right) dx +\textnormal{osc}_{K'}u_\varepsilon\right],
\end{equation}Moreover, since $u_\varepsilon$ is an increasing sequence and converges pointwise to $u$ in $\Omega$, we have
$$\textnormal{osc}_{K'}u_\varepsilon \leq \sup_{K'}u-\inf_{K'}u_{\varepsilon_0},$$for all $\varepsilon < \varepsilon_0$. Then in view of \eqref{in 3}, \eqref{caccio} and the above comments, we can find a uniform bound for the integrals
$$\int_K ||\nabla u_\varepsilon|^{p-2}\nabla u_\varepsilon|^{p/(p-1)}dx,\quad \int_\Omega |\nabla u_\varepsilon|^{p}\xi^{p}dx.$$Hence $|\nabla u_\varepsilon|^{p-2}\nabla u_\varepsilon$ converges weakly in $L^{p/(p-1)}_{loc}(\Omega)$, and $\nabla u_\varepsilon$ converges weakly in $L^{p}_{loc}(\Omega)$. Since $u_\varepsilon$ converges pointwise to $u$, we derive that $u \in W^{1, p}_{loc}(\Omega)$ and $u_\varepsilon$ converges weakly in $W^{1,p}_{loc}(\Omega)$ to $u$. 

More can be said: $\nabla u_\varepsilon$ converges strongly in $L_{loc}^{p}(\Omega)$ to $\nabla u$. Indeed, take
$$\phi(x):=\left(u(x)-u_\varepsilon(x)\right)\theta(x), \quad x \in \Omega,$$where $\theta$ is a non-negative smooth test function compactly supported in $\Omega$. From
\begin{equation*}
\int_\Omega |\nabla u_\varepsilon|^{p-2}\nabla u_\varepsilon \cdot \nabla \phi\,dx \geq \int_\Omega f_\varepsilon(x, u_\varepsilon, \nabla u_\varepsilon)\phi\,dx,
\end{equation*}we get
\begin{equation}\label{weak conv}
\begin{split}
&\int_\Omega  \big[  |\nabla u|^{p-2}\nabla u -|\nabla u_\varepsilon|^{p-2}\nabla u_\varepsilon \big]\cdot \nabla (u-u_\varepsilon)\theta\,dx \\& \qquad \qquad\leq -\int_\Omega f_\varepsilon(x, u_\varepsilon, \nabla u_\varepsilon)\phi\,dx + \int_\Omega |\nabla u|^{p-2}\nabla u \cdot \nabla (u-u_\varepsilon)\theta\,dx.
\end{split}
\end{equation}By the weak convergence of  $u_\varepsilon $ to $ u$ in $W_{loc}^{1,p}(\Omega)$, the last integral in \eqref{weak conv} tends to $0$ as $\varepsilon \to 0$. The left-hand side is given by
\begin{equation}\label{weak con 2}
\begin{split}
& \int_\Omega  \theta\big[  |\nabla u|^{p-2}\nabla u -|\nabla u_\varepsilon|^{p-2}\nabla u_\varepsilon  \big]\cdot \nabla (u-u_\varepsilon)dx \\ & \qquad \qquad + \int_\Omega   (u-u_\varepsilon)\big[  |\nabla u|^{p-2}\nabla u -|\nabla u_\varepsilon|^{p-2}\nabla u_\varepsilon  \big]\cdot \nabla\theta\,dx.
\end{split}
\end{equation}The second integral in \eqref{weak con 2} is estimated in absolute value by

\begin{equation}
\|\nabla \theta\|_{L^\infty(\Omega)}\Big(\int_{\mbox{spt}\,\theta}|u-u_\varepsilon|^{p}dx\Big)^{1/p}\Big[\Big( \int_{\hbox{spt}\,\theta}|\nabla u|^{p}dx \Big)^{(p-1)/p} + \Big( \int_{\mbox{spt}\,\theta}|\nabla u_\varepsilon|^{p} dx\Big)^{(p-1)/p}\Big],
\end{equation}which tends to $0$ as $\varepsilon \to 0$. Moreover, since
\begin{equation}
\begin{split}
&-\int_{\hbox{spt}\,\theta}f_\varepsilon(x, u_\varepsilon, \nabla u_\varepsilon)\phi\, dx \leq \gamma_\infty\int_{\Omega}|\nabla u_\varepsilon|^{p-1}\left(u-u_\varepsilon\right)\theta dx + \|\phi\|_{L^\infty(\hbox{spt}(\theta))}\int_\Omega (u-u_\varepsilon)\theta\,dx,
\end{split}
\end{equation}with $\gamma_\infty:=\sup_{x \in\hbox{spt}(\theta)}|\gamma(u_\varepsilon(x))|$ does not depend on $\varepsilon$, it also holds that
$$\limsup_{\varepsilon \to 0}\Big[ -\int_{\mbox{spt}\,\theta}f_\varepsilon(x, u_\varepsilon, \nabla u_\varepsilon)\phi\,dx\Big] =0.$$Hence
\begin{equation}\label{strong}
\lim_{\varepsilon \to 0}\int_\Omega  \theta\big[  |\nabla u|^{p-2}\nabla u -|\nabla u_\varepsilon|^{p-2}\nabla u_\varepsilon  \big]\cdot \nabla (u-u_\varepsilon)dx  =0,
\end{equation}where we have used the fact that the integrand is always non-negative. Finally, using the inequality
$$2^{p-2}|\nabla u(x) - \nabla u_\varepsilon(x)|^{p} \leq  \big[  |\nabla u(x)|^{p-2}\nabla u(x) -|\nabla u_\varepsilon(x)|^{p-2}\nabla u_\varepsilon(x)  \big]\cdot \nabla (u(x)-u_\varepsilon(x))$$valid for all $p \geq 2$, and \eqref{strong}, we conclude the strong convergence of $\nabla u_\varepsilon$ in $L^p_{loc}(\Omega)$. 
Finally, \eqref{strong} together with \cite[Lemma 3.73]{H}), imply
$$|\nabla u_\varepsilon |^{p-2}\nabla u_\varepsilon \rightharpoonup |\nabla u|^{p-2}\nabla u \mbox{ in }L^{p/(p-1)}_{loc}(\Omega),$$and, in turn,  the strong convergence of the gradients $\nabla u_\varepsilon$ and Lemma \ref{convf} give
$$\lim_{\varepsilon \to 0}\int_\Omega f_\varepsilon(x, u_\varepsilon, \nabla u_\varepsilon)\psi\,dx = \int_\Omega f(x, u, \nabla u)\psi\,dx.$$
This ends the proof of the claim and we deduce that $u$ is a weak supersolution. 
\end{proof}

\subsection{The singular case: $1<p<2$}
Consider now the infimal convolution given in \eqref{infconv} choosing $q>\frac{p}{p-1}$, i.e.,
\begin{equation}\label{uInfSing}
u_\varepsilon(x):=\inf_{y\in\Omega}\left(u(y)+\frac{|x-y|^q}{q\varepsilon^{q-1}}\right).
\end{equation}
 Notice that $q>2$ for $1<p<2$.

We need the following auxiliar result, which is an adaptation of \cite[Lemma 4.3]{JJ}.
\begin{lemma}\label{negF}
Suppose that $u$ is a bounded viscosity supersolution to \eqref{1}. If there is $\hat{x}\in\Omega_{r(\varepsilon)}$ such that $u_\varepsilon$ is differentiable at $\hat{x}$ and $\nabla u_\varepsilon(\hat{x})=0$, then $f_\varepsilon(\hat{x},u_\varepsilon(\hat{x}),\nabla u_\varepsilon(\hat{x}))\leq 0$.
\end{lemma}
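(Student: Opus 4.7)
The strategy is to exploit the fact that the infimum defining $u_\varepsilon(\hat{x})$ is attained, locate the minimizer using $\nabla u_\varepsilon(\hat{x})=0$, and then produce an explicit $\mathcal{C}^2$ paraboloid that touches $u$ from below at $\hat{x}$, on which the $p$-Laplacian can be computed by hand.

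\smallskip

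\textbf{Step 1 (attainment).} Since $u$ is lower semicontinuous and $\overline{B_{r(\varepsilon)}(\hat{x})}\subset\Omega$ is compact, the infimum in \eqref{uInfSing} is realized at some $y_\varepsilon\in\overline{B_{r(\varepsilon)}(\hat{x})}$, giving
\[
u_\varepsilon(\hat{x})=u(y_\varepsilon)+\frac{|\hat{x}-y_\varepsilon|^{q}}{q\varepsilon^{q-1}}.
\]

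\textbf{Step 2 (identifying $y_\varepsilon$).} By the definition of $u_\varepsilon$ we have, for every $x$,
\[
u_\varepsilon(x)-u_\varepsilon(\hat{x})\leq \frac{|x-y_\varepsilon|^{q}-|\hat{x}-y_\varepsilon|^{q}}{q\varepsilon^{q-1}},
\]
with equality at $x=\hat{x}$. Differentiability of $u_\varepsilon$ at $\hat{x}$ plus differentiation of the right-hand side at $\hat{x}$ forces
\[
\nabla u_\varepsilon(\hat{x})=\frac{|\hat{x}-y_\varepsilon|^{q-2}(\hat{x}-y_\varepsilon)}{\varepsilon^{q-1}}.
\]
The hypothesis $\nabla u_\varepsilon(\hat{x})=0$ together with $q>2$ gives $y_\varepsilon=\hat{x}$, so $u_\varepsilon(\hat{x})=u(\hat{x})$.

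\smallskip

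\textbf{Step 3 (admissible test function).} Define
\[
\phi(x):=u(\hat{x})-\frac{|x-\hat{x}|^{q}}{q\varepsilon^{q-1}}.
\]
From $u_\varepsilon(\hat{x})\leq u(x)+\frac{|\hat{x}-x|^{q}}{q\varepsilon^{q-1}}$ and $u_\varepsilon(\hat{x})=u(\hat{x})$, we get $\phi\leq u$ with equality at $\hat{x}$. As $q>2$, $\phi\in\mathcal{C}^{2}(\mathbb{R}^{n})$, $\nabla\phi(\hat{x})=0$ and $\nabla\phi(x)\neq 0$ for $x\neq \hat{x}$, so $\phi$ is an admissible test function from below for $u$ at $\hat{x}$.

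\smallskip

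\textbf{Step 4 (supersolution inequality and computation).} By the viscosity supersolution property of $u$,
\[
\lim_{r\to 0}\sup_{x\in B_r(\hat{x})\setminus\{\hat{x}\}}(-\Delta_p\phi(x))\geq f(\hat{x},u(\hat{x}),0).
\]
Setting $\alpha:=(q-1)(p-1)-1$, a direct calculation gives
\[
-\Delta_p\phi(x)=\frac{(n+\alpha)\,|x-\hat{x}|^{\alpha}}{\varepsilon^{(q-1)(p-1)}}.
\]
The choice $q>\frac{p}{p-1}$ is exactly what yields $\alpha>0$, so $-\Delta_p\phi(x)\to 0$ as $x\to\hat{x}$, and the left-hand side above equals zero. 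Hence $f(\hat{x},u(\hat{x}),0)\leq 0$.

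\smallskip

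\textbf{Step 5 (conclusion).} Since $\hat{x}\in B_{r(\varepsilon)}(\hat{x})$, the definition of $f_\varepsilon$ and $u_\varepsilon(\hat{x})=u(\hat{x})$ give
\[
f_\varepsilon(\hat{x},u_\varepsilon(\hat{x}),\nabla u_\varepsilon(\hat{x}))=f_\varepsilon(\hat{x},u(\hat{x}),0)\leq f(\hat{x},u(\hat{x}),0)\leq 0.
\]

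The only delicate point is the first-order identity in Step 2, which pins the minimizer down and is the reason for requiring $q>p/(p-1)$: this same restriction simultaneously guarantees $\mathcal{C}^{2}$ regularity of $\phi$ (used in Step 3) and the vanishing of $-\Delta_p\phi$ at $\hat{x}$ (used in Step 4). Everything else is bookkeeping.
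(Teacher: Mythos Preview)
Your proof is correct and follows essentially the same approach as the paper's: both construct the test function $\phi(x)=u(\hat{x})-\frac{|x-\hat{x}|^{q}}{q\varepsilon^{q-1}}$, use that it touches $u$ from below at $\hat{x}$, and exploit $q>p/(p-1)$ so that $-\Delta_p\phi\to 0$. The only difference is that the paper cites \cite[Lemma~4.3]{JJ} for the fact $u_\varepsilon(\hat{x})=u(\hat{x})$, whereas you supply a direct argument in Steps~1--2 (attainment plus the first-order condition forcing $y_\varepsilon=\hat{x}$), and you make the computation of $-\Delta_p\phi$ explicit; both additions are correct and make the proof self-contained.
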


\begin{proof}
From \cite[Lemma 4.3]{JJ} we know that $u_\varepsilon(\hat{x})=u(\hat{x})$ and hence
$$u(y)+\frac{|\hat{x}-y|^q}{q\varepsilon^{q-1}}\geq u(\hat{x})\qquad \hbox{ for every }y\in\Omega.$$
Define
$$\psi(y):=u(\hat{x})-\frac{|\hat{x}-y|^q}{q\varepsilon^{q-1}},\qquad y\in\Omega,$$
which satisfies $\psi\in \mathcal{C}^2(\Omega)$, $\nabla \psi(\hat{x})=0$ and
\begin{equation}\label{limsupball}
\lim_{r\rightarrow 0}\sup_{y\in B_r(\hat{x})\setminus\{\hat{x}\}}(-\Delta_p\psi(y))=0,
\end{equation}
in view of $q > p/(p-1)$. Since $\psi(\hat{x})=u(\hat{x})$, $\psi(y)\leq u(y)$ for $y\in \Omega$, $\nabla \psi(x)\neq 0$ for all $x \neq \hat{x}$, and $u$ is a viscosity supersolution to \eqref{1},
$$\lim_{r\rightarrow 0}\sup_{y\in B_r(\hat{x})\setminus\{\hat{x}\}}(-\Delta_p\psi(y))\geq f(\hat{x},\psi(\hat{x}),\nabla\psi(\hat{x})).$$
Noticing that $\psi(\hat{x})=u_\varepsilon(\hat{x})$ and $\nabla\psi(\hat{x})=\nabla u_\varepsilon(\hat{x})=0$, by \eqref{limsupball} we conclude
$$0\geq f(\hat{x},\psi(\hat{x}),\nabla\psi(\hat{x}))=f(\hat{x},u_\varepsilon(\hat{x}),\nabla u_\varepsilon(\hat{x}))\geq f_\varepsilon (\hat{x},u_\varepsilon(\hat{x}),\nabla u_\varepsilon(\hat{x})).$$
\end{proof}

%
%

We can prove now Theorem \ref{nonSing} in the case $1<p<2$.

\begin{proof}[Proof of Theorem \ref{nonSing}]
Let $u_\varepsilon$ be defined in \eqref{uInfSing}. Proceeding as in the degenerate case, by Aleksandrov's theorem and Lemma \ref{infConvLemma}, 
$$-\Delta_p u_\varepsilon \geq f_\varepsilon (x,u_\varepsilon,\nabla u_\varepsilon)$$
a.e. in $\Omega_{r(\varepsilon)}\setminus \{\nabla u_\varepsilon =0\}$.
Performing the same approximation argument as in the proof of \cite[Theorem 4.1]{JJ} we reach that
$$\int_\Omega |\nabla u_\varepsilon|^{p-2}\nabla u_\varepsilon \cdot \nabla \psi\,dx\geq \int_{\Omega\setminus \{\nabla u_\varepsilon=0\}} f_\varepsilon (x,u_\varepsilon,\nabla u_\varepsilon)\psi\,dx,$$
for every $\psi\in \mathcal{C}_0^\infty(\Omega)$, $\psi\geq 0$, and therefore, since by Lemma \ref{negF} we know $f_\varepsilon\leq 0$ in the set $\{x\in\Omega:\;\nabla u_\varepsilon(x)=0\}$, we get
\begin{equation}\label{eqEps}
\int_\Omega |\nabla u_\varepsilon|^{p-2}\nabla u_\varepsilon \cdot\nabla \psi\,dx\geq \int_{\Omega} f_\varepsilon (x,u_\varepsilon,\nabla u_\varepsilon)\psi\,dx.
\end{equation}
Repeating the proof for the case $p\geq 2$ (and noticing that Lemma \ref{caccioppoli} works for every $1<p<\infty$) we obtain the uniform boundedness of $\nabla u_\varepsilon$ in $L^p_{loc}(\Omega)$ and
\begin{equation}\label{conv1}
\lim_{\varepsilon\rightarrow 0}\int_K[|\nabla u|^{p-2}\nabla u-|\nabla u_\varepsilon|^{p-2}\nabla u_\varepsilon]\cdot\nabla(u- u_\varepsilon)\,dx=0,
\end{equation}
for any compact set $K\subset\Omega$, and from here the convergence
\begin{equation}\label{weakConv}
|\nabla u_\varepsilon |^{p-2}\nabla u_\varepsilon \rightharpoonup |\nabla u|^{p-2}\nabla u \mbox{ in }L^{p/(p-1)}_{loc}(\Omega).
\end{equation}
Using H\"older's inequality and the vector inequality (see \cite[Chapter I]{DB})
$$\frac{|a-b|^2}{(|a|+|b|)^{2-p}}\leq C( |a|^{p-2}a-|b|^{p-2}b)\cdot (a-b),\qquad 1<p<2,$$
with $C=C(n,p)$ and $a,b\in\mathbb{R}^n$, we obtain
\begin{equation*}\begin{split}
\int_K|\nabla u-\nabla u_\varepsilon|^p\,dx&\leq \left(\int_K\frac{|\nabla u-\nabla u_\varepsilon|^2}{(|\nabla u|+|\nabla u_\varepsilon|)^{2-p}}\,dx\right)^{p/2}\left(\int_K(|\nabla u|+|\nabla u_\varepsilon|)^p\,dx\right)^{\frac{2-p}{2}}\\
&\leq C \left(\int_K\frac{|\nabla u-\nabla u_\varepsilon|^2}{(|\nabla u|+|\nabla u_\varepsilon|)^{2-p}}\,dx\right)^{p/2}\\
&\leq C \left(\int_K[|\nabla u|^{p-2}\nabla u-|\nabla u_\varepsilon|^{p-2}\nabla u_\varepsilon]\cdot\nabla(u-u_\varepsilon)\,dx\right)^{p/2}.
\end{split}\end{equation*}
Thus, from \eqref{conv1} we deduce that $\nabla u_\varepsilon$ converges to $\nabla u$ in $L^p_{loc}(\Omega)$. By \eqref{weakConv} and Lemma \ref{convf} we can pass to the limit in \eqref{eqEps} to conclude
$$\int_{\Omega}|\nabla u|^{p-1}\nabla u \cdot\nabla\psi\,dx\geq \int_\Omega f(x,u,\nabla u)\psi\,dx.$$
\end{proof}

\section{Proofs of Theorem \ref{converse} and Theorem \ref{nice converse}}\label{Proof converse}

\begin{proof}[Proof of Theorem \ref{converse} (i)]
Let $u  \in W^{1, \infty}_{loc}(\Omega)$ be a weak supersolution to \eqref{1}. To reach a contradiction, assume that $u$ is not a viscosity supersolution.  By assumption, there exist $x_0 \in \Omega$ and $\varphi \in \mathcal{C}^{2}(\Omega)$ so that $\nabla \varphi (x) \neq 0$, for all $x \neq x_0$, 
\begin{equation}\label{minn}
u(x_0)=\varphi(x_0), \quad u(x)>\varphi(x) \mbox{ for all $x\neq x_0$},
\end{equation}and
\begin{equation}\label{minn 2}
\lim_{r\to 0}\sup_{x \in B_r(x_0)\setminus \{x_0\}}\left(-\Delta_p \varphi(x)\right) < f(x_0, u(x_0), \nabla \varphi(x_0)).
\end{equation}
Moreover, by the $W^{1, \infty}_{loc}$-regularity of $u$, we may assume that $u$ is continuous in $\Omega$. Thus, the map
$$x \to f(x, u(x), \nabla \varphi(x))$$is continuous in $\Omega$, and \eqref{minn} yields

$$\lim_{r\rightarrow 0}\sup_{x \in B_r(x_0)\setminus \{x_0\}}\left[-\Delta_p \varphi(x)-f(x,u(x),\nabla\varphi(x))\right]<0.$$Hence, there exists some $r_0>0$ so that
\begin{equation}\label{eqq}
-\Delta_p\varphi\leq f(x,u(x),\nabla \varphi(x)),\qquad x\in B_{r_0}(x_0)\setminus\{x_0\}.
\end{equation}Let
$$m:=\inf_{\partial B_{r_0}(x_0)}\left( u-\varphi\right).$$Then by \eqref{minn}, $m >0$. Consider
$$\tilde{\varphi}(x):=\varphi(x) +m, \quad x \in \Omega.$$By \eqref{eqq},  $\tilde{\varphi}$ is a weak subsolution to
\begin{equation}\label{new eq}
-\Delta_p v = \tilde{f}(x, \nabla v),
\end{equation}in $ B_{r_0}(x_0)$, where $\tilde{f}(x, \eta):=f(x, u(x), \eta)$. Observe that $\tilde{f}$ is continuous in $\Omega \times \mathbb{R}^{n}$ and locally Lipschitz in $\eta$. Moreover, in the weak sense, we have
\begin{equation*}
-\Delta_p u  \geq f(x, u, \nabla u) = \tilde{f}(x, \nabla u),
\end{equation*}which shows that $u$ is a weak supersolution to \eqref{new eq}. In addition, $u \geq \tilde{\varphi}$ on $\partial B_{r_0}(x_0)$. By the Comparison Theorem \ref{comparison for weaks singular}, we conclude that $u \geq \tilde{\varphi}$ in $B_{r_0}(x_0)$. This contradicts \eqref{minn}.

\textit{Proof of Theorem \ref{converse} (ii)}  For a given weak supersolution $u \in \mathcal{C}^{1}(\Omega)$, by following the lines above and appealing to  the Comparison Theorem \ref{comparison for weaks}, we can show that $u$ is a viscosity supersolution in the non-critical set:
$$\{x \in \Omega: \nabla u(x) \neq 0\}.$$By \cite[Corollary 4.4]{JL2}, which holds true  for sub- and supersolutions, $u$ is a viscosity supersolution in the whole set $\Omega$.

\textit{Proof of Theorem \ref{converse} (iii)} The proof follows similarly just by using the assumption $\nabla u \neq 0$ in $\Omega$ together with the Comparison Theorem \ref{comparison for weaks}.
\end{proof}

\begin{remark}Observe that the $W^{1, \infty}_{loc}$-regularity  of $u$ is only needed to apply the comparison principles. In the rest of the proof, the continuity of $u$ would be enough.
\end{remark}

\begin{proof}[Proof of Theorem \ref{nice converse}] The result follows reproducing the proof of Theorem \ref{converse} using the Comparison Theorem \ref{nice case} instead of Theorems \ref{comparison for weaks singular} and \ref{comparison for weaks}.
\end{proof}

\section*{Appendix}

\subsection{Comparison principles for weak solutions} In this section we provide the comparison principles for weak solutions of \eqref{1} that we use in the proof of Theorem \ref{converse}. As we pointed out in the Introduction, other comparison results may be employed (see \cite{Tom}).

The first one is contained in  \cite[Corollary 3.6.3]{PS}. 

\begin{theorem}\label{comparison for weaks}

Assume that $f=f(x, s, \eta)$ is continuous in $\Omega \times \mathbb{R}\times \mathbb{R}^{n}$, non-increasing in $s$, and locally Lipschitz continuous with respect to $\eta$ in $\Omega \times \mathbb{R}\times \mathbb{R}^{n}$. Let $u \in W^{1, \infty}_{loc}(\Omega)$ be a weak supersolution and let $v \in W^{1, \infty}_{loc}(\Omega)$ be a weak subsolution to \eqref{1} in $\Omega$, for $1<p<\infty$. Assume that $|\nabla u|+ |\nabla v| > 0$ in $\Omega$. If $u \geq  v$ on $\partial \Omega$, then $ u \geq v$ in $\Omega$.

\end{theorem}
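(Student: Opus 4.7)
The plan is to argue by contradiction. Suppose the open set $D := \{x \in \Omega : v(x) > u(x)\}$ is non-empty; since $u \geq v$ on $\partial\Omega$, the function $\psi := (v-u)_+$ belongs to $W^{1,\infty}_0(D) \subset W^{1,p}_0(\Omega)$ and is therefore an admissible non-negative test function. Testing the weak-supersolution inequality for $u$ and the weak-subsolution inequality for $v$ against $\psi$ and subtracting yields
\begin{equation*}
\int_D \bigl(|\nabla v|^{p-2}\nabla v - |\nabla u|^{p-2}\nabla u\bigr)\cdot \nabla(v-u)\, dx \leq \int_D \bigl[f(x,v,\nabla v) - f(x,u,\nabla u)\bigr](v-u)\, dx.
\end{equation*}

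To bound the right-hand side, I would split
\begin{equation*}
f(x,v,\nabla v) - f(x,u,\nabla u) = \bigl[f(x,v,\nabla v) - f(x,u,\nabla v)\bigr] + \bigl[f(x,u,\nabla v) - f(x,u,\nabla u)\bigr].
\end{equation*}
On $D$ the first bracket is non-positive by monotonicity of $f$ in $s$, while the second is dominated by $L|\nabla(v-u)|$ via the locally Lipschitz dependence of $f$ in $\eta$; the $W^{1,\infty}_{loc}$ regularity of both $u$ and $v$ provides a uniform Lipschitz constant $L$ over $\overline{D}$ and a uniform upper bound $|\nabla u| + |\nabla v| \leq M$. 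For the left-hand side I would invoke the classical monotonicity inequalities for the $p$-Laplacian vector field: when $p \geq 2$ the integrand admits the pointwise lower bound $c_p|\nabla(v-u)|^p$, and when $1 < p < 2$ the non-degeneracy hypothesis $|\nabla u|+|\nabla v| > 0$ combined with the upper bound $M$ yields the lower bound
\begin{equation*}
(|\nabla v|^{p-2}\nabla v - |\nabla u|^{p-2}\nabla u)\cdot \nabla(v-u) \geq c_p\, \frac{|\nabla(v-u)|^2}{(|\nabla u|+|\nabla v|)^{2-p}} \geq c_p M^{p-2}\, |\nabla(v-u)|^2.
\end{equation*}

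Combining these estimates, using Young's inequality to absorb $L|\nabla(v-u)|(v-u)$ into a fraction of the leading coercive term, and then applying Poincar\'e's inequality for $\psi \in W^{1,q}_0(D)$ with $q=p$ or $q=2$, the argument reduces to a bound of the form $\int_D |\nabla\psi|^q \leq C \int_D |\nabla\psi|^q$ with $C<1$, forcing $\psi \equiv 0$ on $D$ and producing the desired contradiction. The principal obstacle is making this absorption strict: the effective constant depends on $L$, $c_p$, $M$, and the Poincar\'e constant of $D$, and the balance need not be subcritical on all of $\Omega$. The standard remedy is to first establish the comparison on small sub-balls of $D$, on which the Poincar\'e constant scales as a positive power of the radius, and then to propagate it to all of $D$ by a covering argument. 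The non-degeneracy assumption $|\nabla u|+|\nabla v| > 0$ is precisely what legitimizes the quadratic lower bound in the singular range $1<p<2$, while the locally Lipschitz dependence of $f$ in $\eta$ is exactly what permits the first-order error $L|\nabla\psi|$ to be absorbed by the elliptic term.
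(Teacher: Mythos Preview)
The paper does not supply its own proof of this theorem: it is stated in the Appendix with the citation ``contained in \cite[Corollary 3.6.3]{PS}'' and is used as a black box in the proof of Theorem~\ref{converse}. So there is no paper proof to compare against, only the Pucci--Serrin argument to which the paper defers.

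That said, your sketch has a genuine gap at the step you yourself flag as the ``principal obstacle.'' The absorption via Young plus Poincar\'e produces an inequality of the form $\int_D |\nabla\psi|^q \le C\int_D |\nabla\psi|^q$ where $C$ depends on $L$, $c_p$, $M$ and the Poincar\'e constant of $D$, and there is no reason for $C<1$. Your proposed remedy --- ``establish the comparison on small sub-balls of $D$, on which the Poincar\'e constant scales as a positive power of the radius, and then propagate by a covering argument'' --- does not work as stated: to run the same test-function argument on a ball $B_r \subset D$ you would need $(v-u)_+ \in W^{1,q}_0(B_r)$, i.e.\ $v \le u$ on $\partial B_r$, which is exactly what fails inside $D$. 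There is no boundary information on interior balls, so the localization is circular and the covering has nothing to propagate.

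What Pucci--Serrin actually do (and what the paper is invoking) is different in character: they linearize, writing $A_p(\nabla v)-A_p(\nabla u) = \tilde a(x)\nabla(v-u)$ with $\tilde a(x)=\int_0^1 DA_p\bigl(t\nabla v+(1-t)\nabla u\bigr)\,dt$, and similarly express $f(x,v,\nabla v)-f(x,u,\nabla u)$ as a first-order term plus a zeroth-order term with the correct sign (from monotonicity in $s$). This reduces the question to the weak maximum principle for a \emph{linear} divergence-form inequality $-\mathrm{div}(\tilde a\nabla w) \le b\cdot\nabla w + cw$ with $c\le 0$, for which no smallness of the drift $b$ is required. The hypothesis $|\nabla u|+|\nabla v|>0$ enters precisely to make $\tilde a$ elliptic along the segment, which is where the non-degeneracy is genuinely used for $p>2$. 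If you want to rescue your direct energy approach, a workable substitute for the small-ball idea is to test with $(v-u-(M-\delta))_+$ where $M=\sup(v-u)$ and send $\delta\to 0$, exploiting that this function is supported in a set whose measure can be made small; but this too needs care and is not what the paper relies on.
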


In the singular case, the assumptions on the gradients may be removed. See \cite[Corollary 3.5.2]{PS}.

\begin{theorem}\label{comparison for weaks singular}
Let $1<p \leq 2$. Assume that $f=f(x, s, \eta)$ is continuous in $\Omega \times \mathbb{R}\times \mathbb{R}^{n}$, non-increasing in $s$, and that it is locally Lipschitz continuous in $\eta$ on compact subsets of its variables. Then if  $u \in W^{1, \infty}_{loc}(\Omega)$ is a weak supersolution and if $v \in W^{1, \infty}_{loc}(\Omega)$ is a weak subsolution to \eqref{1} in $\Omega$ so that $u\geq v$ on $\partial \Omega$, then $u \geq v$ in $\Omega$.
\end{theorem}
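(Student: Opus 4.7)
The strategy is the classical energy method of testing against the positive part of $v-u$, exploiting three ingredients: the monotonicity of $\xi\mapsto|\xi|^{p-2}\xi$, the monotonicity of $f$ in $s$, and the local Lipschitz dependence in $\eta$. The decisive feature of the singular range $1<p\le 2$ is that the vector inequality
\[
(|a|^{p-2}a-|b|^{p-2}b)\cdot(a-b)\ \ge\ (p-1)\,\frac{|a-b|^{2}}{(|a|+|b|)^{2-p}}
\]
can be turned into an honest $L^{2}$-bound on $\nabla(v-u)$, precisely because $2-p\ge 0$ allows us to replace $(|a|+|b|)^{2-p}$ by an upper bound using $\|\nabla u\|_{\infty},\|\nabla v\|_{\infty}$ on the relevant compact set; this is why the assumption $u,v\in W^{1,\infty}_{\text{loc}}(\Omega)$ enters, and why the non-vanishing of the gradients (needed in the degenerate range of Theorem \ref{comparison for weaks}) can be dropped here.

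\textbf{Main steps.} First, I would set $w:=v-u$ and note that the hypothesis $u\ge v$ on $\partial\Omega$ means $w^{+}:=\max(w,0)\in W^{1,p}_{0}(\Omega)\cap L^{\infty}(\Omega)$ (after the standard Lipschitz truncation/approximation so that $w^{+}$ is admissible as a non-negative test function in both weak inequalities). Subtracting the subsolution inequality for $v$ from the supersolution inequality for $u$, tested against $w^{+}$, yields
\[
\int_{\{v>u\}}\!\bigl(|\nabla v|^{p-2}\nabla v-|\nabla u|^{p-2}\nabla u\bigr)\cdot\nabla w\ \le\ \int_{\{v>u\}}\!\bigl[f(x,v,\nabla v)-f(x,u,\nabla u)\bigr]\,w^{+}.
\]
Second, I would split the right-hand side as
\[
[f(x,v,\nabla v)-f(x,v,\nabla u)]+[f(x,v,\nabla u)-f(x,u,\nabla u)],
\]
observe that on $\{v>u\}$ the second bracket is $\le 0$ by monotonicity of $f$ in $s$, and bound the first bracket in absolute value by $L|\nabla w|$, where $L$ is the Lipschitz constant of $f$ in $\eta$ on the compact set $K\times[-R,R]\times\overline{B}_{M}$, with $R:=\|u\|_{\infty,K}+\|v\|_{\infty,K}$, $M:=\|\nabla u\|_{\infty,K}+\|\nabla v\|_{\infty,K}$ and $K:=\operatorname{spt} w^{+}\Subset\Omega$.

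\textbf{Third step and conclusion.} Applying the singular vector inequality to the left-hand side and using $(|\nabla u|+|\nabla v|)^{2-p}\le M^{2-p}$ (here is where $p\le 2$ is essential, and where no hypothesis on the vanishing set of $\nabla u,\nabla v$ is needed), I obtain
\[
\frac{p-1}{M^{2-p}}\int_{\{v>u\}}|\nabla w|^{2}\ \le\ L\int_{\{v>u\}}|\nabla w|\,w^{+}.
\]
An application of the Cauchy--Schwarz inequality followed by the Poincar\'e inequality on $\Omega$ (or a localized version on balls shrinking inside $\Omega$) gives
\[
\Bigl(\int|\nabla w^{+}|^{2}\Bigr)^{1/2}\le C\,M^{2-p}L\,C_{P}\Bigl(\int|\nabla w^{+}|^{2}\Bigr)^{1/2}.
\]
Since the Poincar\'e constant $C_{P}$ scales with the diameter of the domain, one exhausts $\Omega$ by an increasing family of subdomains of small diameter (or equivalently performs the above argument on a ball of radius so small that $C\,M^{2-p}L\,C_{P}<1$), forcing $\nabla w^{+}\equiv 0$ on each, hence $w^{+}\equiv 0$.

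\textbf{Main obstacle.} The delicate point is making the passage from pointwise inequalities to the integral estimate rigorous when $\nabla u$ or $\nabla v$ may vanish on a set of positive measure: the singular quotient $|\nabla w|^{2}/(|\nabla u|+|\nabla v|)^{2-p}$ is $+\infty$ on $\{\nabla u=\nabla v=0\}\cap\{\nabla w\ne 0\}$, but since $2-p\ge 0$ the replacement by $M^{2-p}$ \emph{weakens} the bound and bypasses the issue; thus the real care is in localizing so that $M$ is truly finite, which is precisely what the $W^{1,\infty}_{\text{loc}}$ hypothesis guarantees. The only other subtlety is the iteration/localization ensuring the smallness of the Poincar\'e constant, and the standard checking that $w^{+}$ is an admissible test function, both of which are routine.
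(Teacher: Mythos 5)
Your first two steps are fine, but be aware that the paper itself does not prove this theorem: it is quoted from Pucci--Serrin \cite[Corollary 3.5.2]{PS}, and the closing remark of the appendix only explains why the case $1<p\le 2$ fits that framework (the operator $A_p$ is uniformly elliptic on bounded ranges of the gradient, which is also exactly the role your $W^{1,\infty}_{loc}$ hypothesis plays). So a self-contained energy proof would be a genuinely different route, and your set-up --- testing with $(v-u)^+$, splitting $f(x,v,\nabla v)-f(x,u,\nabla u)$ into a gradient increment bounded by $L|\nabla(v-u)|$ and an $s$-increment with the good sign, and bounding the left-hand side below via the singular vector inequality together with $(|\nabla u|+|\nabla v|)^{2-p}\le M^{2-p}$ --- is sound. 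One technical caveat: $\operatorname{spt}(v-u)^+$ need not be compactly contained in $\Omega$ (the hypothesis only orders $u$ and $v$ on $\partial\Omega$), so your constants $R$, $M$, and hence $L$ need not be finite; the standard repair is to test with $(v-u-\epsilon)^+$, whose support is compactly contained when the boundary inequality is read as $\limsup_{x\to\partial\Omega}(v-u)\le 0$, and let $\epsilon\to 0$.

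The genuine gap is the last step. From $\frac{p-1}{M^{2-p}}\int|\nabla w^+|^2\le L\int w^+|\nabla w^+|$, Cauchy--Schwarz plus Poincar\'e gives $w^+\equiv 0$ only under the smallness condition $C\,L\,M^{2-p}C_P<1$, and your proposed way out --- exhausting $\Omega$ by subdomains of small diameter, or running the argument on a small ball --- does not work, because comparison is not a local statement: on a ball $B\Subset\Omega$ you do not know $u\ge v$ on $\partial B$ (that ordering is precisely what is being proved), so $(v-u)^+\notin W^{1,2}_0(B)$ and the small Poincar\'e constant is unavailable; moreover, no increasing exhaustion of a fixed domain can consist of sets of small diameter. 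This is the well-known obstruction for comparison principles with first-order terms, already present in the linear case $p=2$: the weak maximum principle for $-\Delta w\le L|\nabla w|$ is not obtained from Poincar\'e alone, but from a level-set argument \`a la Gilbarg--Trudinger (levels $l\uparrow\sup w$, the Sobolev inequality, and $\nabla w=0$ a.e.\ on $\{w=\sup w\}$), or from a change of unknown/exponential-type test function that absorbs the gradient term. An argument of this kind --- which is what the quoted Pucci--Serrin proof supplies --- must replace your final step; as written, your proof only yields the theorem when the domain (or the Lipschitz constant $L$) is small.
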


Finally, in the case where $f$ does not depend on $\eta$, we have the following result (see \cite[Corollary 3.4.2]{PS}).

\begin{theorem}\label{nice case} Suppose that $f=f(x, s)$ is continuous in $\Omega \times \mathbb{R}$ and non-increasing in $s$. Let $u \in W^{1,p}_{loc}(\Omega) \cap \mathcal{C}(\Omega)$ be a supersolution and $v \in W^{1,p}_{loc}(\Omega) \cap \mathcal{C}(\Omega)$ a subsolution so that $u \geq v$ on $\partial \Omega$. Then $u \geq v$ in $\Omega$.

\end{theorem}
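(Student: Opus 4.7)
The plan is to run the classical monotonicity argument for the $p$-Laplacian, testing the difference of the weak formulations against the truncation $(v-u)^+$. First I would set
$$w := (v-u)^+, \qquad E := \{x \in \Omega : v(x) > u(x)\},$$
and observe that by continuity of $u,v$ together with the boundary inequality $u \geq v$ on $\partial\Omega$, the function $w$ vanishes in a neighborhood of $\partial\Omega$ (in the appropriate Sobolev trace sense), so that $w \in W^{1,p}_{0}(\Omega)$ and $w \geq 0$. A standard density argument then allows me to use $w$ as an admissible test function in the weak formulations of both $u$ (supersolution) and $v$ (subsolution).

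Next I would subtract the two inequalities to get
$$\int_{\Omega}\bigl(|\nabla v|^{p-2}\nabla v - |\nabla u|^{p-2}\nabla u\bigr)\cdot \nabla w\,dx \leq \int_{\Omega}\bigl(f(x,v)-f(x,u)\bigr)w\,dx.$$
On $E$ we have $v>u$, so the monotonicity hypothesis (that $f$ is non-increasing in $s$) forces $f(x,v)-f(x,u)\leq 0$, hence the right-hand side is $\leq 0$. Since $w=0$ and $\nabla w=0$ a.e.\ on $\Omega\setminus E$, and $\nabla w = \nabla(v-u)$ on $E$, the left-hand side reduces to
$$\int_{E}\bigl(|\nabla v|^{p-2}\nabla v - |\nabla u|^{p-2}\nabla u\bigr)\cdot(\nabla v-\nabla u)\,dx.$$
By the well-known monotonicity inequality for the vector field $\eta \mapsto |\eta|^{p-2}\eta$ (valid for all $1<p<\infty$, with two different elementary estimates in the ranges $p\ge 2$ and $1<p<2$), the integrand above is pointwise non-negative and vanishes only when $\nabla v = \nabla u$.

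Combining, I conclude that both sides vanish, so $\nabla v = \nabla u$ a.e.\ on $E$, and therefore $\nabla w \equiv 0$ a.e.\ in $\Omega$. Since $w \in W^{1,p}_{0}(\Omega)$, the Poincar\'e inequality (or simply the fact that a $W^{1,p}_0$ function with zero gradient is zero) gives $w\equiv 0$, i.e.\ $v \leq u$ a.e.\ in $\Omega$; continuity then yields $v \leq u$ pointwise. The main technical obstacle I anticipate is the rigorous verification that $(v-u)^+$ belongs to $W^{1,p}_{0}(\Omega)$ (since the hypotheses only give continuity in $\Omega$ and the boundary inequality pointwise on $\partial\Omega$, not up to the boundary in a Sobolev sense); the standard remedy is to replace $(v-u)^+$ by $(v-u-\delta)^+$ for $\delta>0$, whose support is compactly contained in $\Omega$ by continuity, apply the above argument, and then let $\delta\to 0^+$ via monotone/dominated convergence.
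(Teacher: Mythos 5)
Your argument is correct, and it is worth noting that the paper does not actually prove this statement: Theorem \ref{nice case} is quoted directly from Pucci and Serrin \cite[Corollary 3.4.2]{PS}, so your proposal supplies a self-contained proof where the paper relies on a citation. Your route is the classical monotonicity--truncation argument: test both weak formulations with the same nonnegative truncation of $v-u$, use that $f=f(x,s)$ does not depend on $\eta$ and is non-increasing in $s$ to make the subtracted right-hand side nonpositive on the set $\{v>u\}$, and invoke the strict monotonicity of $\eta\mapsto|\eta|^{p-2}\eta$ (valid for all $1<p<\infty$) to force $\nabla(v-u)=0$ there. This also makes transparent why this case requires neither the local Lipschitz continuity of $f$ in $\eta$ nor the $W^{1,\infty}_{loc}$, $\mathcal{C}^1$, or non-vanishing-gradient hypotheses appearing in Theorems \ref{comparison for weaks} and \ref{comparison for weaks singular}: when $f$ depends on the gradient, the subtracted term $f(x,v,\nabla v)-f(x,u,\nabla u)$ cannot be signed without extra structure, whereas here it is $\leq 0$ immediately.

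Two small points of care. First, your opening claim that $(v-u)^+$ vanishes near $\partial\Omega$ and hence lies in $W^{1,p}_{0}(\Omega)$ does not follow from the stated hypotheses (continuity only inside $\Omega$ plus the boundary inequality); you flag this yourself, and your remedy is the right one: test with $w_\delta=(v-u-\delta)^+$, whose support $\{v-u\geq\delta\}$ is a compact subset of the bounded set $\Omega$ once the boundary condition is read as $\limsup_{x\to\xi}\,(v-u)(x)\leq 0$ for every $\xi\in\partial\Omega$. Note that the final limit is then immediate: $w_\delta\equiv 0$ gives $v\leq u+\delta$ in $\Omega$ for every $\delta>0$, so no monotone or dominated convergence theorem is needed. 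Second, since the paper's definition of weak sub- and supersolution uses test functions in $\mathcal{C}^{\infty}_0(\Omega)$, the density step you mention is genuinely needed: mollify $w_\delta$ (nonnegativity and compact support are preserved), and pass to the limit using that $|\nabla u|^{p-1},\,|\nabla v|^{p-1}\in L^{p/(p-1)}_{loc}(\Omega)$ and that $f(x,u(x)),\,f(x,v(x))$ are locally bounded by continuity. With these routine justifications in place, your proof is complete.
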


\begin{remark} In \cite{PS}, Theorem \ref{comparison for weaks} is stated in a more general framework of equations in divergence form as
\begin{equation}
\mbox{div}\left( A(x, u, \nabla u)\right)=f(x, u, \nabla u),
\end{equation}where the operator $A=A(x,s, \eta)$ is assumed to be continuous in $\Omega \times \mathbb{R}\times \mathbb{R}^{n}$, continuously differentiable with respect  to $s$ and $\eta$ for all $s$ and all $\eta \neq 0$, and elliptic in the sense that $\nabla A(x,s,\eta)$ is positive definite in $\Omega \times \mathbb{R}\times \left( \mathbb{R}^{n}\setminus \{0\}\right)$. In the particular case of the p-Laplace operators
$$A_p(\eta)=|\eta|^{p-2}\eta, \quad p \geq2,$$
\begin{equation*}\label{boundary problem2}
A_p(\eta)=  \left\lbrace
  \begin{array}{l}
     |\eta|^{p-2}\eta,  \text{ if } \eta \neq0\\
    0,  \qquad \,\,\,\text{ for } \eta=0, \\
  \end{array}
  \right.\quad 1<p\leq2,
\end{equation*}all of the assumptions above are satisfied. The positive definiteness of $\nabla A_p$ is a consequence of
$$\sum_{i,j=1}^{n}\frac{\partial A^{i}}{\partial \eta_j}(\eta)\xi_i\xi_j \geq c|\eta|^{p-2}|\xi|^{2},$$for a positive constant $c$. Finally, observe that $A$ is uniformly elliptic for $0 < |\eta| \leq C$ if $p\leq2$.  This allows the improved comparison result in Theorem \ref{comparison for weaks singular}.

\end{remark}

\section*{Acknowledgements}

The authors would like to thank the anonymous referee for her$\setminus$his comments.


\begin{thebibliography}{00}

\bibitem{CIL} M. Crandall, H. Ishii and P-L. Lions, User's guide to viscosity solutions of second order partial differential equations, \textit{Bull. of Amer. Soc.} \textbf{27}  1 (1992), 1-67.
\bibitem{CT} M. Cuesta and P. Tak\'a\v{c}, \textit{A strong comparison principle for positive solutions of degenerate elliptic equations}, Differential and Integral Equations \textbf{13} 4-6 (2000), 721-746.
\bibitem{DS} L. Damascelli and B. Sciunzi, \textit{Regularity, monotonicity and symmetry of positive solutions of m-Laplace equations}, Journal of Differential Equations \textbf{206} (2004), 483-515.


\bibitem{DB} E. DiBenedetto, \textit{$\mathcal{C}^{1+\alpha}$-local regularity of weak solutions of degenerate elliptic equations}, Nonlinear Anal. \textbf{7} (1983), 827-850.

\bibitem{H} J. Heinonen, T. Kilpelainen,  and O. Martio, \emph{Non-linear potential theory of degenerate elliptic equations}. Dover Publications, Inc., Mineola, NY, 2006. 

\bibitem{JJ} V. Julin, P. Juutinen, \textit{A new proof for the equivalence of weak and viscosity solutions for the p-Laplace equation}. Communications in PDE \textbf{37} 5 (2012), 934-946.
\bibitem{JLr} P. Juutinen and P. Lindqvist, \textit{A theorem of Rad\'o's type for the solutions of a quasi-linear equation}, Mathematical Research Letters \textbf{11} (2004), 31-34.
\bibitem{JL2} P. Juutinen and P. Lindqvist, \textit{Removability of a level set for solutions of quasilinear equations}, Communications in PDE \textbf{30} 3 (2005), 305-321.
\bibitem{JLM} P. Juutinen, P. Lindqvist, and J. Manfredi, \textit{On the equivalence of viscosity solutions and weak solutions for a quasilinear equation}, SIAM J. Math. Anal. \emph{33} 3 (2001), 699-717.\bibitem{N}N. Katzourakis, \textit{Nonsmooth convex functionals and feeble viscosity solutions of singular Euler–Lagrange equations}, Calculus of Variations and PDE \textbf{54} 1 (2015), 275-298.
\bibitem{Tom} T. Leonori, A. Porretta and G. Riey, \textit{Comparison principles for p-Laplace equations with lower order terms}, Annali di Matematica (2016), 1-27.
\bibitem{Lib} G. Lieberman, \textit{Boundary regularity for solutions of degenerate elliptic equations}, Nonlinear Anal. \textbf{12} 11 (1988), 1203-1219.
\bibitem{L} P. Lindqvist, \textit{On the definition and properties of p-superharmonic functions}, J. Reine Angew. Math. \textbf{365} (1986), 67-70.
\bibitem{MMS} S. Merch\'an, L. Montoro and B. Sciunzi, \textit{On the Harnack inequality for quasilinear elliptic equations with a first order  term}. Preprint.

\bibitem{PS} P. Pucci and J. Serrin, \textit{The  maximum principle}, Progress in non-linear differential equations and their applications Vol. 73 Birkh\"{a}user, Boston, 2007.
\bibitem{P} P. Pucci and R. Servadei, \textit{Regularity of weak solutions of homogeneous or inhomogeneous elliptic equations}, Indiana University Mathematical Journal \textbf{57}  7 (2008), 3329-3363.
\bibitem{RZ} J. Rakotoson and W. Ziemer, \textit{Local behaviour of solutions of quasilinear elliptic equations with general structure}, Transaction of the American Mathematical Society \textbf{319} 2 (1990), 747-764.




\end{thebibliography}
\end{document}